\documentclass[a4paper,11pt,reqno]{amsart}

\usepackage{amsmath}
\usepackage{amsfonts}
\usepackage{amssymb}
\usepackage{amsthm}

\numberwithin{equation}{section}

\newtheorem{theorem}[equation]{Theorem}
\newtheorem{lemma}[equation]{Lemma}

\newtheorem{corollary}[equation]{Corollary}


\theoremstyle{definition}
\newtheorem{definition}[equation]{Definition}
\newtheorem{example}[equation]{Example}


\theoremstyle{remark}
\newtheorem{remark}[equation]{Remark}

\def\kint_#1{\mathchoice%
          {\mathop{\kern 0.2em\vrule width 0.6em height 0.69678ex depth -0.58065ex
                  \kern -0.8em \intop}\nolimits_{\kern -0.4em#1}}%
          {\mathop{\kern 0.1em\vrule width 0.5em height 0.69678ex depth -0.60387ex
                  \kern -0.6em \intop}\nolimits_{#1}}%
          {\mathop{\kern 0.1em\vrule width 0.5em height 0.69678ex depth -0.60387ex
                  \kern -0.6em \intop}\nolimits_{#1}}%
          {\mathop{\kern 0.1em\vrule width 0.5em height 0.69678ex depth -0.60387ex
                  \kern -0.6em \intop}\nolimits_{#1}}}

\newcommand{\R}{\mathbb{R}}

\renewcommand{\mod}{\operatorname{mod}}
\newcommand{\dd}{\,d}
\newcommand{\dmu}{\,d\mu}
\newcommand{\eps}{\varepsilon}
\newcommand{\essinf}{\operatornamewithlimits{ess\, inf}}
\newcommand{\dist}{\operatorname{dist}}

\title{Strong $A_\infty$--weights are $A_\infty$--weights on metric spaces}
\author{Riikka Korte AND Outi Elina Maasalo}
\address{R.K. Department of Mathematics and Statistics, P.O. Box 68 (Gustaf H\"allstr\"omin katu 2b), FI-00014 University of Helsinki, Finland.}
\email{riikka.korte@helsinki.fi}
\address{O.E.M. Universit\"at Bern,
Mathematisches Institut,
Sidlerstrasse 5,
3012 Bern, Switzerland.}
\email{outi.elina.maasalo@tkk.fi}
\subjclass[2000]{42B35}
\keywords{metric doubling measure, metric spaces, Muckenhoupt weights, strong $A_\infty$--weight}
\begin{document}
\maketitle
\begin{abstract}
We prove that every
strong $A_\infty$--weight is a Muckenhoupt weight in Ahlfors--regular metric measure spaces that support a Poincar\'e inequality.
We also explore the relations between various definitions for $A_\infty$-weights in this setting, since some of these characterizations are needed in the proof of the main result.

\end{abstract}

\section{Introduction}
The purpose of this paper is to study strong $A_\infty$--weights and $A_\infty$--weights in Ahlfors-regular metric measure spaces. In particular, we answer to a question proposed by Costea in \cite{Cost}, and show that every strong $A_\infty$--weight is an $A_p$--weight for some $p<\infty$ also in general metric setting. The space is assumed to be Ahlfors--regular and satisfy a weak $(1,1)$--Poincar\'e inequality. We thus extend the result by Semmes \cite{Semm93} from $\R^n$ to general metric spaces. The Euclidean proof used extensively the linear structure of $\R^n$, for example convolutions and lines parallel to the coordinate axes. These tools are naturally not available in the metric setting. However, they can be replaced by more general methods. This shows, in particular, that the geometry of $\R^n$ is not crucial to the result. 

Strong $A_\infty$-weights were first introduced in $\R^n$ by David and Semmes in~\cite{DaviSemm90} and~\cite{Semm93} when trying to characterize the subclass of $A_\infty$--weights that are comparable to the Jacobian determinants of quasiconformal mappings.  Later they have studied strong $A_\infty$--weights, for example, in  \cite{Semm96}.  See also  Bonk, Heinonen and Saksman~\cite{BonkHeinSaks04} and~\cite{BonkHeinSaks08} and Heinonen and Koskela \cite{HeinKosk95} for further results concerning the quasiconformal Jacobian problem. Recently, strong $A_\infty$-weights have been studied, for example, by Costea in~\cite{Cost} and~\cite{Cost07}. In~\cite{Cost} he studies connections between strong $A_\infty$--weights and Besov and Morrey spaces, and in ~\cite{Cost07} he extends the results to the metric setting. 
Strong $A_\infty$--weights turn out to be useful in various applications, such as in studying elliptic partial differential equations, weighted Sobolev inequalities and Mumford--Shah type functionals. See, for example, \cite{BaldFran05}, \cite{Bjor02}, \cite{DiFaZamb06}, \cite{DiFaZamb05}, \cite{FranGutiWhee94}, \cite{HeinSemm97} and \cite{Laak02}.

In Euclidean spaces, there are several equivalent characterizations for $A_\infty$--weights. For example, a weight is an $A_\infty$--weight if and only if it satisfies the reverse H\"older inequality or belongs to the class $A_p$ for some finite $p$. 
Some of these relations are needed in proving that strong $A_\infty$--weights are $A_\infty$--weights. 
However, in more general spaces, all of these conditions are not necessarily equivalent, and, in particular, the class of $A_\infty$--weights can be strictly larger than the union of $A_p$--classes, see Str�omberg and  Torchinsky \cite{StroTorc89}. 
In the last section of the paper, following  \cite{StroTorc89}, we study the relations between five different conditions in general metric spaces, and, in particular, we show that strong $A_\infty$-weights satisfy all of them. 
Furthermore, we give some examples of weights that only satisfy some of the characterizations.

\section{Preliminaries}
\subsection{Assumptions on the measure}
Let $(X,d,\mu)$ be a metric measure space, where $\mu$ is Borel
regular. We assume that the space is Ahlfors $Q$--regular with
$Q\geq 1$, i.e. there exists $c_A\geq 1$ such that
\[
 \frac{1}{c_A} r^Q\leq \mu(B(x,r))\leq c_Ar^Q
\]
for all $x\in X$ and $r>0$. Notice that such a measure is always
\emph{doubling}, that is, there exists a constant $c_D\geq 1$ such
that
\[
\mu(B(x,2r))\leq c_D\mu(B(x,r))
\]
for all $x\in X$ and $r>0$. Later, $\lambda B$ denotes the ball with the same center as $B$ but $\lambda$ times its radius.
\subsection{Modulus of a curve family and Newtonian spaces}
Let $1\leq p<\infty$. For a given curve family $\Gamma$ in $X$, we define the $p$--modulus of $\Gamma$ by
\[
 \mod_p\Gamma=\inf\int_X\rho^p \dmu,
\]
where the infimum is taken over all nonnegative Borel functions $\rho:X\rightarrow[0,\infty]$ satisfying
\begin{equation}\label{eqn:modulus_condition}
 \int_\gamma\rho \,ds\geq 1
\end{equation}
for all rectifiable curves $\gamma\in\Gamma$. We remind that a
curve is rectifiable if its length is finite.

Let $u$ be a real--valued function on $X$. We recall that a nonnegative Borel
measurable function $g$ on $X$ is said to be an \emph{upper
gradient} of $u$ if for all rectifiable curves $\gamma$ joining
points $x$ and $y$ in $X$ we have
\begin{equation}\label{upper-gr}
|u(x)-u(y)|\leq \int_{\gamma}g\,ds.
\end{equation}
If the above property fails only for a set of curves that is of zero
$p$--modulus then $g$ is said to be a \emph{p--weak upper gradient} of $u$. 
Every function $u$ that has a $p$--integrable
$p$--weak upper gradient has a \emph{minimal $p$--integrable
$p$--weak upper gradient} denoted $g_u$. 

Finally, we recall that the Newtonian space $N^{1,p}(X)$
to is the collection of all $p$--integrable functions $u$ on $X$
that have a $p$--integrable $p$--weak upper gradient $g$ on $X$. For the precise definition; see, for example, \cite{Shan00}.

\subsection{Poincar\'e inequality}
We assume that $X$ satisfies a weak
$(1,1)$--Poin\--car\'e inequality, i.e. there exists constants
$c_P,\lambda >0$ such that
\[
 \kint_{B(x,r)}|u-u_{B(x,r)}|\dmu\leq c_Pr \kint_{B(x,\lambda r)}g_u\dmu
\]
for all $u\in N^{1,1}(X)$, $x\in X$ and $r>0$.

The following estimate is a consequence of the Poincar\'e
inequality; see for example Lemma 3.3 in~\cite{Bjor02} for a proof.

\begin{lemma}\label{lemma:modulus}
Let $(X,d,\mu)$ be a metric measure space, where $\mu$ is doubling
and $X$ supports a weak $(1,1)$--Poincar\'e inequality. Let
$\Gamma$ be a curve family consisting of all rectifiable curves
joining 
$B(x_0,r)$ and $X\setminus B(x_0,2r)$.
Then
\[
 \mod_1(\Gamma)\geq C \mu(B(x_0,r))/r.
\]
The constant  $C>0$ depends only on $c_D$ and $c_P$.
\end{lemma}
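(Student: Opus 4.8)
The plan is to bound $\mod_1(\Gamma)$ from below by producing, for an \emph{arbitrary} admissible density $\rho$, the estimate $\int_X\rho\dmu\ge C\mu(B(x_0,r))/r$, and then taking the infimum over all such $\rho$; we may assume $\int_X\rho\dmu<\infty$, since otherwise there is nothing to prove. The device I would use is to turn $\rho$ into a test function for the Poincar\'e inequality: fix an admissible $\rho$ and set
\[
 u(x)=\min\Bigl\{1,\ \inf_\gamma\int_\gamma\rho\dd s\Bigr\},
\]
the infimum being over all rectifiable curves $\gamma$ from $x$ to $\overline{B(x_0,r)}$ (such curves exist, since a space supporting a Poincar\'e inequality is quasiconvex, so in particular $\Gamma\neq\emptyset$). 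Then $0\le u\le 1$ and $u\equiv 0$ on $\overline{B(x_0,r)}$; the crucial observation is that $u\equiv 1$ on $X\setminus B(x_0,2r)$, because any rectifiable curve joining such a point to $\overline{B(x_0,r)}$ belongs to $\Gamma$ and hence satisfies \eqref{eqn:modulus_condition}. Concatenating curves shows that \eqref{upper-gr} holds with $g=\rho$, so $\rho$ is an upper gradient of $u$ (truncation at the level $1$ does not spoil this), and therefore $g_u\le\rho$ $\mu$--a.e.; moreover $u\in N^{1,1}_{\mathrm{loc}}(X)$.

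Next I would select the scale at which to apply the Poincar\'e inequality. By Ahlfors regularity there is $\kappa=\kappa(c_A,Q)>2$ with $\mu(B(x_0,2r))\le\tfrac12\mu(B(x_0,\kappa r))$; put $B=B(x_0,\kappa r)$. Then $\{u=0\}\cap B\supseteq B(x_0,r)$ and $\{u=1\}\cap B\supseteq B\setminus B(x_0,2r)$, and Ahlfors regularity makes the measure of each of these two sets at least a fixed fraction of $\mu(B)$, the fraction depending only on $c_A$ and $Q$. Since $|u-u_B|\equiv u_B$ on $\{u=0\}$ and $|u-u_B|\equiv 1-u_B$ on $\{u=1\}$, the average $\kint_B|u-u_B|\dmu$ is bounded below by $u_B\,\mu(\{u=0\}\cap B)/\mu(B)+(1-u_B)\,\mu(\{u=1\}\cap B)/\mu(B)$, i.e. by a convex combination of the two fractions above, hence by their minimum; this yields
\[
 \kint_{B}|u-u_B|\dmu \ge c_1
\]
for some $c_1=c_1(c_A,Q)>0$.

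Finally, I would feed $u$ into the weak $(1,1)$--Poincar\'e inequality on $B$ and use $g_u\le\rho$ together with $\kint_{B(x_0,\lambda\kappa r)}\rho\dmu\le\mu(B(x_0,\lambda\kappa r))^{-1}\int_X\rho\dmu$:
\begin{align*}
 c_1 \le \kint_{B}|u-u_B|\dmu &\le c_P\,\kappa r\kint_{B(x_0,\lambda\kappa r)}g_u\dmu\\
 &\le \frac{c_P\,\kappa r}{\mu(B(x_0,\lambda\kappa r))}\int_X\rho\dmu .
\end{align*}
Ahlfors regularity bounds $\mu(B(x_0,\lambda\kappa r))$ from below by a constant times $r^Q$ and $\mu(B(x_0,r))$ from above by a constant times $r^Q$, so rearranging gives $\int_X\rho\dmu\ge C\mu(B(x_0,r))/r$ with $C>0$ depending only on $c_A$, $Q$, $c_P$ and $\lambda$; taking the infimum over admissible $\rho$ completes the argument. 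The step I expect to be the main obstacle is the one in the first paragraph: verifying that the truncated weighted distance $u$ really lies in $N^{1,1}_{\mathrm{loc}}(X)$ with $\rho$ as an upper gradient — this rests on concatenating curves and on the fact that truncating a function does not enlarge its upper gradients — and handling the (harmless) point that $u$ is not globally integrable, since $u\equiv 1$ outside a bounded set while $\mu(X)=\infty$ by Ahlfors regularity, which is dealt with by the standard observation that the Poincar\'e inequality is a local statement. Everything else is a routine application of the Poincar\'e inequality and of doubling.
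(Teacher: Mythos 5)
Your argument is correct and is essentially the standard one: the paper does not prove this lemma itself but cites Lemma~3.3 of~\cite{Bjor02}, and the route there is the same --- turn an admissible $\rho$ into the truncated $\rho$--distance $u$ to the ball, observe $u=0$ on $B(x_0,r)$ and $u=1$ off $B(x_0,2r)$ with $\rho$ as an upper gradient, and feed $u$ into the Poincar\'e inequality at a slightly larger scale. The routine technicalities you flag (measurability of $u$, handled by reducing to lower semicontinuous $\rho$ via Vitali--Carath\'eodory; locality of the Poincar\'e inequality; replacing $\overline{B(x_0,r)}$ by $B(x_0,r)$ as the target set so that every competitor curve genuinely lies in $\Gamma$) are indeed harmless. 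The only discrepancy worth noting is that your choice of $\kappa$ uses Ahlfors regularity, so your constant depends on $c_A$ and $Q$ rather than only on $c_D$ and $c_P$ as the lemma asserts; this is immaterial under the paper's standing assumptions, and could be avoided by using instead the reverse doubling property of connected doubling spaces (or a two--case argument on the size of $u_{B(x_0,2r)}$) to produce the two subsets of comparable measure.
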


Next we define $A_p$-- and strong $A_\infty$--weights.

\subsection{$A_p$--weights}\label{weights}
Let $1< p<\infty$ and $1/p+1/q=1$. Let $\omega$ be a nonnegative
function on $X$. We say that $\omega$ is an \emph{$A_p$--weight},
and write $\omega\in A_p$ if there exists constant $c_\omega>0$
such that
\begin{equation*}
\bigg(\kint_B\omega \dmu\bigg)\bigg(\kint_B\omega^{1-q}\dmu\bigg)^
{p-1}\leq c_\omega
\end{equation*}
for all balls $B$ in $X$.

We say that $\omega$ is an \emph{$A_1$--weight}, and write
$\omega\in A_1$ if there exists a constant  $c_\omega>0$ such that
\[
\kint_B \omega\dmu\leq c_\omega\essinf_B\omega
\]
for all balls $B$ in $X$.

Finally,  $\omega$ is an \emph{$A_\infty$--weight} and we write
$\omega\in A_\infty$ if there exists constants $c_\omega>0$ and
$\delta>0$ such that
\[
 \frac{\int_E\omega\dmu}{\int_B \omega\dmu}\leq
 c_\omega\left(\frac{\mu(E)}{\mu(B)}\right)^\delta
\]
for all balls $B$ in $X$ and all measurable subsets $E$ of $B$.

We will discuss the relations between these definitions in Section~\ref{section:characterizations}.

\subsection{Strong $A_\infty$--weight}

Let $\nu$ be a doubling measure on $X$. We associate to $\nu$ the quasi--distance $\delta_\nu(x,y)$ on $X$ defined by
\[
 \delta_\nu(x,y)=[\nu(B(x,d(x,y)))+\nu(B(y,d(x,y)))]^{1/Q}.
\]
We say that $\nu$ is a \emph{metric doubling measure}, if
there exists a distance function $\delta:X\times X\rightarrow [0,\infty)$ and a finite constant $C>0$ such that
\begin{equation}\label{eqn:distance}
 \frac1C \delta(x,y)\leq \delta_\nu(x,y)\leq C \delta(x,y)
\end{equation}
 for all $x,y\in X$.
Moreover,  $\omega\in L_{loc}^1(X)$ is called a \emph{strong $A_\infty$--weight}, $\omega\in SA_\infty$, if it is a density of a metric doubling measure, i.e.
\[
 \dd\nu=\omega\dd\mu.
\]


Remark that the property~\eqref{eqn:distance} can be characterized
in the following way, which will be useful later.

\begin{lemma}\label{lemma:kolmioey}
 The condition~\eqref{eqn:distance} holds if and only if there is a constant $C>0$ such that for any finite sequence $x_1,x_2,\ldots,x_k$ of points in $X$, we have
\begin{equation}\label{eqn:distancelemma}
 \delta_\nu(x_1,x_k)\leq C\sum_{j=1}^{k-1}\delta_\nu(x_j,x_{j+1}).
\end{equation}
\end{lemma}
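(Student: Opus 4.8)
The plan is to prove the two implications separately, noting that the nontrivial direction is that the chain inequality \eqref{eqn:distancelemma} implies the existence of a genuine distance $\delta$ comparable to $\delta_\nu$. For the easy direction, suppose \eqref{eqn:distance} holds, so there is a distance $\delta$ with $\tfrac1C\delta\le\delta_\nu\le C\delta$. Given points $x_1,\dots,x_k$, apply the ordinary triangle inequality for $\delta$ repeatedly: $\delta_\nu(x_1,x_k)\le C\delta(x_1,x_k)\le C\sum_{j=1}^{k-1}\delta(x_j,x_{j+1})\le C^2\sum_{j=1}^{k-1}\delta_\nu(x_j,x_{j+1})$, which is \eqref{eqn:distancelemma} with constant $C^2$. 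So this direction is immediate and requires no structure beyond the comparability.

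For the converse, assume \eqref{eqn:distancelemma} holds with constant $C$. The natural candidate for the distance is the ``chain (or path) infimum''
\[
 \delta(x,y)=\inf\Bigl\{\sum_{j=1}^{k-1}\delta_\nu(x_j,x_{j+1})\ :\ k\ge 2,\ x_1=x,\ x_k=y,\ x_j\in X\Bigr\}.
\]
One checks directly that $\delta$ is a genuine distance: symmetry is inherited from $\delta_\nu$ (reverse the chain), the triangle inequality $\delta(x,z)\le\delta(x,y)+\delta(y,z)$ holds because concatenating a near-optimal chain from $x$ to $y$ with one from $y$ to $z$ is an admissible chain from $x$ to $z$, and $\delta(x,y)=0$ forces $x=y$ once we know $\delta(x,y)$ is comparable to $\delta_\nu(x,y)$ (and $\delta_\nu$ vanishes only on the diagonal, since $\nu$ is a nontrivial doubling measure). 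It remains to establish the two-sided comparability: the bound $\delta(x,y)\le\delta_\nu(x,y)$ is trivial by taking the one-step chain $x_1=x$, $x_2=y$, while the bound $\delta_\nu(x,y)\le C\,\delta(x,y)$ is exactly the content of \eqref{eqn:distancelemma} applied to an arbitrary chain from $x$ to $y$, followed by taking the infimum over all such chains. This yields \eqref{eqn:distance} with the same constant $C$.

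The only subtlety — and the step I would be most careful about — is the nondegeneracy $\delta(x,y)>0$ for $x\ne y$, which is why we must record that $\delta_\nu(x,y)>0$ for $x\ne y$: this follows since $\nu$ is doubling, hence gives positive mass to every ball of positive radius, so $\delta_\nu(x,y)\ge\nu(B(x,d(x,y)))^{1/Q}>0$ whenever $d(x,y)>0$. Combined with the comparison $\delta\ge C^{-1}\delta_\nu$ just proved, this gives $\delta(x,y)>0$, closing the argument. I would also remark that $\delta$ need not be continuous or induce the original topology, but \eqref{eqn:distance} only asks for a distance function with the stated comparability, so no further regularity is needed.
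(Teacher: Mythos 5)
Your proposal is correct and follows essentially the same route as the paper: the easy direction via the triangle inequality for $\delta$, and the converse via the chain-infimum construction $\delta(x,y)=\inf\sum_j\delta_\nu(x_j,x_{j+1})$, with \eqref{eqn:distancelemma} giving the comparability $\delta_\nu\leq C\delta$. The only difference is that you spell out the nondegeneracy of $\delta$ (which the paper leaves as ``easy to check''), and your argument for it is sound.
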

\begin{proof}
It is immediate that~\eqref{eqn:distance} implies~\eqref{eqn:distancelemma}. To prove the converse, define
\[
 \delta(x,y)=\inf\sum_{i=0}^{N-1} \delta_\nu(z_i,z_{i+1}),
\]
 where  the infimum is taken over all finite sequences $z_0=x,z_1,\ldots,z_N=y$.
Clearly $\delta(x,y)\leq\delta_\nu(x,y)$ for all $x,y\in X$, and~\eqref{eqn:distancelemma} implies that $\delta_\nu(x,y)\leq C\delta(x,y)$. It is also easy to check that $\delta(\cdot,\cdot)$ is a distance function.
\end{proof}

The following theorems give some examples of strong $A_\infty$-weights.
\begin{theorem}\label{theorem:A_1}
 Every $A_1$--weight is a strong-$A_\infty$--weight.
\end{theorem}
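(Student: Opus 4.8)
The plan is to show directly that if $\omega \in A_1$ with $d\nu = \omega\, d\mu$, then the quasi-distance $\delta_\nu$ satisfies the chain condition \eqref{eqn:distancelemma} of Lemma~\ref{lemma:kolmioey}, which by that lemma is equivalent to \eqref{eqn:distance} and hence makes $\nu$ a metric doubling measure. So it suffices to produce a constant $C>0$ with $\delta_\nu(x_1,x_k) \leq C\sum_{j=1}^{k-1} \delta_\nu(x_j,x_{j+1})$ for every finite chain. First I would record two facts. One: since $\mu$ is Ahlfors $Q$--regular and $\omega \in A_1 \subset A_\infty$, the measure $\nu$ is doubling (standard, from the $A_1$ condition and the doubling property of $\mu$), so $\delta_\nu$ is a genuine quasi-distance and it is enough to compare $\delta_\nu(x,y)^Q \simeq \nu(B(x,d(x,y)))$, i.e. we may work with $\nu$-measures of balls and take $Q$-th roots at the end. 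Two: the $A_1$ condition $\kint_B \omega\, d\mu \leq c_\omega \essinf_B \omega$ combined with Ahlfors regularity gives, for any ball $B$ of radius $r$, the comparison $\nu(B) = \int_B \omega\,d\mu \simeq r^Q \essinf_B \omega$, and more usefully it forces $\nu$ to behave submultiplicatively along nested balls.

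The core estimate I would aim for is a \emph{single-step domination}: for any two points $x,y$ and any intermediate point $z$,
\[
\nu\big(B(x,d(x,y))\big)^{1/Q} \leq C\Big[\nu\big(B(x,d(x,z))\big)^{1/Q} + \nu\big(B(z,d(z,y))\big)^{1/Q}\Big],
\]
after which the general $k$-chain inequality follows by an easy induction (splitting off the last vertex and using that $\delta_\nu$ is a quasi-distance to absorb constants, exactly as one proves that a quasi-distance satisfying a $3$-point version satisfies the $k$-point version). To get the single-step bound, set $t = d(x,y)$, $s = d(x,z)$, $u = d(z,y)$, so $t \leq C_0(s+u)$ by the quasi-triangle inequality for $d$ (here $d$ is a metric, so $C_0 = 1$, but I will keep $C_0$ to emphasize only the metric space structure is used). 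If, say, $s \geq t/2$, then $B(x,t) \subset B(x,2s)$, and doubling of $\nu$ gives $\nu(B(x,t)) \leq \nu(B(x,2s)) \lesssim \nu(B(x,s))$, which is the first term on the right; symmetrically if $u \geq t/2$ one controls $\nu(B(x,t))$ against $\nu(B(y,t)) \simeq \nu(B(z,u))$ using doubling and the fact that $d(y,z) = u \leq t$ so $B(x,t) \subset B(z, 2t) \subset B(z, 4u)$...

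Wait — that last inclusion is false when $u$ is small, so the honest argument must use $A_1$ rather than mere doubling: when $s, u < t/2$, I would use that $\essinf_{B(x,t)}\omega \leq \essinf_{B(x,s)}\omega$ and $\essinf_{B(x,t)}\omega \leq \essinf_{B(z,u)}\omega$ (infima only decrease on larger balls, once one notes $B(x,s) \subset B(x,t)$ and $B(z,u)\subset B(x,t)$ since $d(x,z)+d(z,y) \geq d(x,y)$ forces... actually $B(z,u) \subset B(x, s+u) \subset B(x,2t)$), together with $\nu(B(x,t)) \simeq t^Q \essinf_{B(x,t)}\omega$ and $t \leq s + u \leq 2\max(s,u)$, to conclude $\nu(B(x,t))^{1/Q} \lesssim \max(s,u)\,\big(\essinf_{B(x,s)}\omega \wedge \essinf_{B(z,u)}\omega\big)^{1/Q} \lesssim \nu(B(x,s))^{1/Q} + \nu(B(z,u))^{1/Q}$, after possibly enlarging the balls on the right by a bounded factor and invoking doubling of $\nu$ once more.

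The main obstacle, I expect, is handling the regime where $s$ and $u$ are both much smaller than $t$ cleanly: there mere doubling is insufficient and one genuinely needs the $A_1$ lower-bound-by-essential-infimum, and one must be careful that the balls $B(x,s)$ and $B(z,u)$ over which the essential infima are taken do sit inside a bounded dilate of $B(x,t)$ so that the monotonicity $\essinf_{\text{large}} \leq \essinf_{\text{small}}$ can be applied after one controlled enlargement. Once the three-point inequality is established with a constant depending only on $c_A$, $c_D$ and $c_\omega$, Lemma~\ref{lemma:kolmioey} delivers \eqref{eqn:distance} and the theorem follows.
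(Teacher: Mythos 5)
Your plan has a fatal gap at the step ``the general $k$-chain inequality follows by an easy induction'' from the three-point estimate. The three-point inequality
\[
\delta_\nu(x,y)\leq C\bigl(\delta_\nu(x,z)+\delta_\nu(z,y)\bigr)
\]
holds for \emph{every} doubling measure $\nu$ --- it is exactly the statement that $\delta_\nu$ is a quasi-distance, and as you yourself observe, the only regime where you would need $A_1$ (both $d(x,z)<t/2$ and $d(z,y)<t/2$) is vacuous because $d$ is a genuine metric. So your core estimate uses nothing beyond doubling. But condition \eqref{eqn:distancelemma} requires a constant \emph{independent of the length $k$ of the chain}, and iterating a quasi-triangle inequality with constant $C>1$ produces a constant like $C^{k-1}$ (or $C^{\log_2 k}$ with a balanced splitting), which is unbounded in $k$. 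If the induction you describe worked, every doubling weight would be a strong $A_\infty$-weight, which is false; the entire content of the metric-doubling-measure condition is precisely the uniformity in $k$ that a quasi-distance does not automatically have.

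The paper's proof gets the uniform constant by a global, one-shot argument rather than by chaining local estimates. It first reduces (as in Semmes) to chains with all $x_j\in B(x_1,2d(x_1,x_k))$, sets $B=B(x_1,d(x_1,x_k))$ and $B_j=B(x_j,d(x_j,x_{j+1}))$, and bounds every summand from below by the \emph{same} quantity: $\delta_\nu(x_j,x_{j+1})\geq \bigl(\essinf_{6B}\omega\bigr)^{1/Q}\mu(B_j)^{1/Q}$, since each $B_j\subset 6B$ and the essential infimum only decreases on the larger ball. Ahlfors regularity gives $\mu(B_j)^{1/Q}\gtrsim d(x_j,x_{j+1})$, and the exact triangle inequality for $d$ (constant $1$, so nothing compounds) yields $\sum_j\delta_\nu(x_j,x_{j+1})\gtrsim \bigl(\essinf_{6B}\omega\bigr)^{1/Q}d(x_1,x_k)\gtrsim\bigl(\essinf_{6B}\omega\bigr)^{1/Q}\mu(B)^{1/Q}$. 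Only at this final stage is $A_1$ invoked, once, to convert $\essinf_{6B}\omega\cdot\mu(B)\gtrsim\int_B\omega\,d\mu$ and hence dominate $\delta_\nu(x_1,x_k)^Q$. If you want to salvage your approach you must find a mechanism of this kind that treats the whole chain at once; a pointwise three-point estimate cannot do the job.
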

\begin{proof}
Notice first that the statement of Lemma~\ref{lemma:kolmioey} holds if and only if it holds with the additional restriction that $x_j\in B(x_1,2d(x_1,x_k))$ for all $j$. The proof is similar to the Euclidean case and can be found in \cite{Semm93}.  

Assume then, that $\omega\in A_1$ and $d\nu=\omega\dmu $. Let $x_1,\ldots,x_k$ be given and assume, that $x_j\in
B(x_1,2d(x_1,x_k))$ for all $j$. Write $B=B(x_1,d(x_1,x_k))$ and $B_j=B(x_j,d(x_j,x_{j+1}))$ for $j=1,\ldots,k-1$.

Notice also, that since $\nu$ is doubling, it readily follows from the definition of $\delta_\nu$  that for all $x,y\in X$, we have
\begin{equation}
\label{eqn:comparison}
\nu(B(x,d(x,y)))^{1/Q}\leq \delta_\nu(x,y) \leq C \nu(B(x,d(x,y)))^{1/Q},
\end{equation}
where $C$ depends only on the doubling constant.  Then \eqref{eqn:comparison} implies that
\begin{equation*}
 \begin{split}
   \delta_\nu(x_j,x_{j+1})&\geq  \bigg(\int_{B_j}\omega\dmu \bigg)^{1/Q} 
\geq \big(\essinf_{B_j}\omega\big)^{1/Q} \mu(B_j)^{1/Q} \\
&\geq \big(\essinf_{6B}\omega\big)^{1/Q} \mu(B_j)^{1/Q}.
 \end{split}
\end{equation*}
Summing the above inequality over $j=1,\ldots,k-1$ we get
\begin{equation*}
 \begin{split}
   \sum_{j=1}^{k-1}\delta_\nu(&x_j,x_{j+1}) \geq  \big(\essinf_{6B}\omega\big)^{1/Q}\sum_{j=1}^{k-1}\mu(B_j)^{1/Q}  \\
& \geq C \big(\essinf_{6B}\omega\big)^{1/Q} \sum_{j=1}^{k-1}d(x_j,x_{j+1})  \geq C\big(\essinf_{6B}\omega\big)^{1/Q} d(x_1,x_k)\\
&\geq C\big(\essinf_{6B}\omega\big)^{1/Q} \mu(B)^{1/Q},
\end{split}
\end{equation*}
where we used the triangle inequality and the $Q$--regularity of $\mu$. Moreover, since $\omega$ is an $A_1$--weight, and $\mu$ is doubling, we get
\begin{equation*}
 \begin{split}
\big(\essinf_{6B}\omega&\big)^{1/Q} \mu(B)^{1/Q}  \geq C\bigg(\kint_{6B} \omega\dmu\bigg)^{1/Q} \mu(B)^{1/Q}\\
& \geq C \bigg(\int_{B} \omega\dmu\bigg)^{1/Q} 
\geq C \delta_{\nu}(x_1,x_k).
\end{split}
\end{equation*}
The proof follows now from Lemma \ref{lemma:kolmioey}. 
\end{proof}

\begin{theorem}\label{theorem:jacobian}
Let $(X,d_X,\mu_X)$ and $(Y,d_Y,\mu_Y)$ be locally compact $Q$--regular metric measure spaces such that $X$ supports a weak $(1,p)$--Poincar\'e inequality for some $p<Q$ and let $f:X\rightarrow Y$ be a quasisymmetric mapping. Then the Jacobian of $f$ is a strong $A_\infty$-weight.
\end{theorem}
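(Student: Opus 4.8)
The plan is to define the Jacobian via the metric structure, $J_f(x) = \limsup_{r\to 0}\mu_Y(f(B(x,r)))/\mu_X(B(x,r))$, and to show that the pullback measure $\nu(E) = \mu_Y(f(E))$ satisfies $d\nu = J_f\,d\mu_X$ and is a metric doubling measure. By the definition of $SA_\infty$ it suffices to exhibit a distance function $\delta$ on $X$ comparable to $\delta_\nu$. The natural candidate is $\delta(x,y) = d_Y(f(x),f(y))$, which is genuinely a distance since $f$ is injective (quasisymmetry forces injectivity) and $d_Y$ is a distance. So the whole problem reduces to proving the two-sided comparison
\[
\tfrac1C\, d_Y(f(x),f(y)) \le \big[\nu(B(x,d_X(x,y))) + \nu(B(y,d_X(x,y)))\big]^{1/Q} \le C\, d_Y(f(x),f(y)).
\]

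First I would use quasisymmetry to control the shape of images of balls: if $f$ is $\eta$-quasisymmetric, then $f(B(x,r))$ is ``roundish'', i.e.\ there are radii $\rho_1(x,r) \le \rho_2(x,r)$ with $B(f(x),\rho_1) \subset f(B(x,r)) \subset B(f(x),\rho_2)$ and $\rho_2/\rho_1 \le H$ for a constant $H$ depending only on $\eta$; moreover one can take $\rho_1(x,r) \asymp \rho_2(x,r) \asymp d_Y(f(x),f(y))$ when $r = d_X(x,y)$, by applying the quasisymmetry inequality to the triple $x,y,z$ for $z$ ranging over points at distance $r$ from $x$. Combined with the $Q$-regularity of $\mu_Y$, this gives $\nu(B(x,d_X(x,y))) = \mu_Y(f(B(x,d_X(x,y)))) \asymp d_Y(f(x),f(y))^Q$, and symmetrically for the $y$-ball (here one uses $d_Y(f(x),f(y)) \asymp d_Y(f(y),f(x))$ trivially, but also that the image of $B(y,d_X(x,y))$ has comparable diameter, again by quasisymmetry since $d_X(x,y) \asymp d_X(y,x)$). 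Taking $Q$-th roots yields exactly the displayed comparison, hence $\delta_\nu \asymp \delta$ and condition \eqref{eqn:distance} holds.

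It remains to check that $\nu$ is doubling and that its density is $J_f$. Doubling of $\nu$ follows from the roundishness of images of balls together with $Q$-regularity of $\mu_Y$: $\nu(B(x,2r)) = \mu_Y(f(B(x,2r))) \lesssim \rho_2(x,2r)^Q \lesssim \rho_1(x,r)^Q \lesssim \mu_Y(f(B(x,r))) = \nu(B(x,r))$, where the middle step uses that $\rho_2(x,2r) \lesssim \rho_1(x,r)$, which is another consequence of quasisymmetry (images of a ball and its double have comparable radii). That $\nu \ll \mu_X$ with $L^1_{loc}$ density is where the Poincaré inequality with $p < Q$ enters: it guarantees (via the results on absolute continuity of quasisymmetric maps between $Q$-regular Loewner-type spaces, or directly via the fact that under these hypotheses $X$ is a Loewner space and $f$ is absolutely continuous in measure) that $\mu_Y(f(E)) = 0$ whenever $\mu_X(E)=0$, so by Radon--Nikodym $d\nu = J_f\,d\mu_X$ for some nonnegative $J_f \in L^1_{loc}(X)$, and one identifies $J_f$ with the volume derivative by the Lebesgue differentiation theorem for the doubling measure $\mu_X$.

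The main obstacle is the absolute continuity step: without the Poincaré inequality a quasisymmetric map need not be absolutely continuous (it could collapse positive-measure sets onto null sets or vice versa), so one genuinely needs $p < Q$ and the resulting Loewner property to rule this out; everything else is a fairly mechanical exploitation of quasisymmetry plus $Q$-regularity. I would cite the relevant absolute-continuity result (e.g.\ from Heinonen--Koskela or Tyson) rather than reprove it, and spend the bulk of the write-up on the clean geometric comparison $\delta_\nu \asymp d_Y(f(\cdot),f(\cdot))$.
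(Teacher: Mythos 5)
Your proposal is correct and follows essentially the same route as the paper: the comparison $\delta_\nu(x,y)\asymp d_Y(f(x),f(y))$ via the roundishness of images of balls (the paper's $l(x,r)\le L(x,r)\le K\,l(x,r)$ sandwich) combined with $Q$--regularity of $\mu_Y$, together with the observation that $d_Y(f(\cdot),f(\cdot))$ is itself a distance on $X$. Your treatment of the absolute continuity step (where $p<Q$ and the Poincar\'e inequality actually enter) is more explicit than the paper's, which simply invokes Lebesgue--Radon--Nikodym ``since the measures are absolutely continuous,'' but the argument is the same.
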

\begin{proof}
We write 
\[
L(x,r)= \sup_{d_X(x,y)\leq r}{d_Y(f(x),f(y))},\quad l(x,r)= \inf_{d_X(x,y)\geq r}{d_Y(f(x),f(y))},
\] 
and recall that $f$ is quasisymmetric, if it is homeomorphism with a positive and finite constant $K$ such that $L(x,r)\leq K\, l(x,r)$ for all $x\in X$ and $r>0$. Moreover, remember that the generalized Jacobian is defined as
\[
J_f(x)=\lim_{r\to 0}\frac{\mu_Y(f(B(x,r)))}{\mu_X(B(x,r))}.
\]
By the Lebesgue-Radon-Nikodym theorem, the limit exists almost everywhere, and
\begin{equation}
\label{eqn:jacob}
 \int_EJ_f\,d\mu_X = \mu_Y(f(E))
\end{equation}
for all measurable $E\subset X$, since the measures are absolutely continuous. 

Consider $d\nu=J_f\,d\mu_X$. Let $x,y\in X$ and write $d(x,y)=r$. Then
\begin{equation*}
 \begin{split}
  d_Y(f(x),f(y))^Q&\leq L(x,r)^Q\leq K^Q l(x,r)^Q \\ 
&\leq CK^Q\mu_Y(B(f(x),l(x,r))) \leq CK^Q\mu_Y(f(B(x,r)))\\
&= CK^Q  \int_{B(x,r)}J_f\,d\mu_X  \leq CK^Q \delta_\nu(x,y)^Q.
 \end{split}
\end{equation*}
Here we used the $Q$--regularity and quasisymmetricity. On the other hand,
\begin{equation*}
 \begin{split}
\delta_\nu(x,y)^Q &= \int_{B(x,r)}J_f\,d\mu_X + \int_{B(y,r)}J_f\,d\mu_X  \\
&= \mu_Y(f(B(x,r))) + \mu_Y(f(B(y,r))) \\
&\leq \mu_Y(B(f(x),L(x,r))) + \mu_Y(B(f(y),L(y,r))) \\
&\leq  C\big(L(x,r)^Q + L(y,r)^Q\big) \leq CK^Q \big( l(x,r)^Q+l(y,r)^Q \big) \\
&\leq 2CK^Q d_Y(f(x),f(y))^Q.
   \end{split}
\end{equation*}
Since $d_Y(f(\cdot),f(\cdot))$ is a distance on $X$, the claim follows.
\end{proof}


\section{Main result}

In this  section, we prove that strong $A_\infty$--weights are $A_\infty$-weights in Ahlfors regular metric spaces.
First, we recall the Gehring lemma. A proof can be found, for
example, in \cite{Maas08},~\cite{Zato05} and~\cite{BjorBjor}.

\begin{theorem}
\label{lemma:gehring} Let $1<p<\infty$ and  assume that $f\in
L^1_{loc}(X)$ is nonnegative and defines a doubling measure. If
there exists a constant $c$ such that $f$ satisfies the reverse
H\"older inequality
\begin{equation}
\label{rhi2} \left(\kint_{B}f^p\dmu \right)^{1/p} \leq c
\kint_{B}f\dmu
\end{equation}
for all balls $B$ of $X$, then there exists positive constants  $\eps$ and $c_\eps$ such that
\begin{equation}
\label{improved-rhi} \left(\kint_{B}f^{p+\eps}\dmu
\right)^{1/(p+\eps)} \leq c_\eps \kint_{B}f\dmu
\end{equation}
for all balls $B$ of $X$. The constant $c_\eps$ as well as $\eps$
depend only on the doubling constant, $p$, and on the constant in
\eqref{rhi2}.
\end{theorem}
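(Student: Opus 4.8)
\noindent\emph{Sketch of the argument.} The plan is to obtain the self--improvement from a Calder\'on--Zygmund stopping--time argument, which converts the reverse H\"older hypothesis~\eqref{rhi2} into a decay estimate for the distribution function of $f^{p}$, and then to integrate that estimate up so as to gain a little extra integrability. Write $g=f$; then~\eqref{rhi2} says
\[
\kint_{B}g^{p}\dmu\le c^{p}\bigg(\kint_{B}g\dmu\bigg)^{p}
\]
for every ball $B$, and in particular $g\in L^{p}_{loc}(X)$. I would fix a ball $B_{0}$, set $B^{*}=2B_{0}$, and aim at constants $\eps,c'>0$ depending only on $c_{D}$, $p$ and $c$ for which
\[
\bigg(\kint_{B_{0}}g^{p+\eps}\dmu\bigg)^{1/(p+\eps)}\ \le\ c'\kint_{B^{*}}g\dmu ;
\]
since $g\dmu$ is doubling this yields $\kint_{B^{*}}g\dmu\le C\kint_{B_{0}}g\dmu$ and hence~\eqref{improved-rhi}. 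By homogeneity one may normalise $\kint_{B^{*}}g\dmu=1$.

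For each height $t>t_{0}$, with $t_{0}=c$ (so that $t_{0}^{p}\ge c^{p}\bigl(\kint_{B^{*}}g\dmu\bigr)^{p}\ge\kint_{B^{*}}g^{p}\dmu$ by~\eqref{rhi2}), I would perform a Calder\'on--Zygmund decomposition on $B^{*}$ at level $t$. Concretely, fixing a system of dyadic cubes for $B^{*}$ in the sense of Christ and letting $\mathcal{M}$ be the associated dyadic maximal operator, the set $\Omega_{t}:=\{x\in B^{*}:\mathcal{M}(g^{p})(x)>t^{p}\}$ is the disjoint union of the maximal dyadic cubes $Q_{i}$ whose average of $g^{p}$ exceeds $t^{p}$; passing to the dyadic parent and using the doubling of $\mu$ gives the two--sided bound $t^{p}<\kint_{Q_{i}}g^{p}\dmu\le Ct^{p}$, while the Lebesgue differentiation theorem (valid since $\mu$ is doubling) gives $g^{p}\le t^{p}$ for $\mu$--a.e.\ $x\in B^{*}\setminus\Omega_{t}$ and $\{g>t\}\cap B^{*}\subset\Omega_{t}$ up to a null set. \emph{I expect this decomposition to be the main point: in a space with no linear structure one must still manufacture the cubes $Q_{i}$ with the upper \emph{as well as} the lower bound on the average of $g^{p}$ and with constants controlled by $c_{D}$ alone, which is precisely why a genuine dyadic structure, rather than a raw Vitali covering, is the right tool here.}

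Next the reverse H\"older inequality enters. On each $Q_{i}$, \eqref{rhi2} forces $\kint_{Q_{i}}g\dmu\ge c^{-1}\bigl(\kint_{Q_{i}}g^{p}\dmu\bigr)^{1/p}\ge c^{-1}t$; splitting this average over $\{g\le\eta t\}$ and $\{g>\eta t\}$ and discarding the first piece, for a small fixed $\eta\in(0,1)$ depending on $c$, gives $\sum_{i}\mu(Q_{i})\le Ct^{-1}\int_{\{g>\eta t\}\cap B^{*}}g\dmu$. Combining this with $\int_{Q_{i}}g^{p}\dmu\le Ct^{p}\mu(Q_{i})$ and summing over the disjoint $Q_{i}$ yields, for every $t>t_{0}$,
\[
G(t):=\int_{\{g>t\}\cap B^{*}}g^{p}\dmu\ \le\ \int_{\Omega_{t}}g^{p}\dmu\ \le\ C\,t^{\,p-1}\!\!\int_{\{g>\eta t\}\cap B^{*}}\!\!g\dmu\ =:\ C\,t^{\,p-1}H(\eta t).
\]

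Finally I would integrate up, truncating so that the absorption step is legitimate. Using $\min(g,T)^{\eps}=\eps\int_{0}^{\min(g,T)}s^{\eps-1}\dd s$ and Fubini gives
\[
\int_{B^{*}}g^{p}\min(g,T)^{\eps}\dmu\ \le\ t_{0}^{\eps}\int_{B^{*}}g^{p}\dmu+\eps\int_{t_{0}}^{T}t^{\eps-1}G(t)\dd t ,
\]
whose left side is finite since it is at most $T^{\eps}\int_{B^{*}}g^{p}\dmu$. Inserting the bound for $G(t)$, substituting $s=\eta t$ and applying Fubini once more estimates the last integral by $\eps\,C\,\eta^{-\eps}\int_{B^{*}}g^{p}\min(g,T)^{\eps}\dmu$; here $p>1$ is used both to keep the power $s^{p+\eps-2}$ integrable near $0$ and to guarantee $g\,\min(g,\eta T)^{p+\eps-1}\le g^{p}\min(g,T)^{\eps}$. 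Since $\eps\,\eta^{-\eps}\to0$ as $\eps\to0$, for $\eps$ small this term is at most $\tfrac12\int_{B^{*}}g^{p}\min(g,T)^{\eps}\dmu$ and may be absorbed; letting $T\to\infty$ by monotone convergence gives $\int_{B^{*}}g^{p+\eps}\dmu\le 2t_{0}^{\eps}\int_{B^{*}}g^{p}\dmu$. Dividing by $\mu(B_{0})$, using $\mu(B^{*})\le c_{D}\mu(B_{0})$, applying~\eqref{rhi2} on $B^{*}$ once more, and recalling $t_{0}=c$ together with $\kint_{B^{*}}g\dmu=1$ turns this into $\kint_{B_{0}}g^{p+\eps}\dmu\le C$, the normalised form of the estimate in the first paragraph. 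Undoing the normalisation and passing from $B^{*}=2B_{0}$ back to $B_{0}$ through the doubling of $g\dmu$ then yields~\eqref{improved-rhi}, with $\eps$ and $c_{\eps}$ depending only on $c_{D}$, $p$ and $c$.
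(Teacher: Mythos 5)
The paper does not actually prove this statement: it is the Gehring lemma, and the text only points to \cite{Maas08}, \cite{Zato05} and \cite{BjorBjor} for a proof. Your argument is, in substance, the standard Calder\'on--Zygmund proof of Gehring's lemma for doubling metric measure spaces that those references carry out: a dyadic (Christ-cube) stopping time at level $t^{p}$ producing cubes with two-sided control $t^{p}<\kint_{Q_i}f^{p}\dmu\leq Ct^{p}$, the reverse H\"older inequality converting this into the reverse weak-type bound $\int_{\{f>t\}\cap B^{*}}f^{p}\dmu\leq Ct^{p-1}\int_{\{f>\eta t\}\cap B^{*}}f\dmu$, and a truncation--absorption integration of the distribution function. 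The scheme is correct, and you rightly identify the dyadic structure as the substitute for Euclidean cubes. Two small points deserve attention. First, \eqref{rhi2} is stated for balls, while you apply it on the Christ cubes $Q_{i}$; passing from the ball $B_{Q}\supset Q$ on which \eqref{rhi2} holds to the cube $Q$ requires $\int_{B_{Q}}f\dmu\leq C\int_{Q}f\dmu$, which is where the hypothesis that $f$ defines a \emph{doubling} measure enters (it is also needed at the very end to return from $2B_{0}$ to $B_{0}$); this should be said explicitly, since the reverse H\"older inequality alone does not obviously give it. Second, after the substitution $s=\eta t$ the absorbed prefactor is of order $\eps\,\eta^{-(p+\eps-1)}/(p+\eps-1)$ rather than $\eps\,\eta^{-\eps}$; since $\eta$ is a fixed constant depending only on $c$, this still tends to $0$ as $\eps\to 0$ and the absorption goes through unchanged. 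With these clarifications the proof is complete and yields constants depending only on $c_{D}$, $p$ and $c$, as required.
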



Now we are ready to state our main result.
\begin{theorem}
Every strong $A_\infty$--weight is an $A_\infty$--weight.
\end{theorem}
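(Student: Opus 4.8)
The plan is to establish a reverse Hölder inequality for a strong $A_\infty$--weight $\omega$ with some exponent $p>1$, and then conclude by the Gehring lemma (Theorem~\ref{lemma:gehring}) together with the standard fact that a reverse Hölder weight lies in some $A_q$, hence in $A_\infty$. So the heart of the matter is to show that if $d\nu=\omega\,d\mu$ defines a metric doubling measure, then $\omega$ satisfies
\[
\left(\kint_B \omega^{p}\dmu\right)^{1/p}\leq c\kint_B\omega\dmu
\]
for all balls $B$, with $p>1$ and $c$ depending only on the structural constants. Since $\nu$ is doubling, the left side is controlled once we bound $\mu$-averages of $\omega$ on $B$ by a constant multiple of the average over a fixed dilate, i.e. once we produce a quantitative oscillation bound; the natural route is to exploit the quasi-distance $\delta_\nu$ and the hypothesis~\eqref{eqn:distance}.

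First I would fix a ball $B=B(x_0,r)$ and relate $\delta_\nu(x,y)$ for $x,y\in B$ to $\nu$-measures of balls via~\eqref{eqn:comparison}, so that $\delta_\nu(x,y)^Q\approx\nu(B(x,d(x,y)))\approx\int_{B(x,d(x,y))}\omega\,d\mu$. The key geometric input is Lemma~\ref{lemma:modulus}: the curve family $\Gamma$ joining $B(x_0,r)$ to $X\setminus B(x_0,2r)$ has $1$-modulus at least $C\mu(B(x_0,r))/r$. Using the comparability~\eqref{eqn:distance} of $\delta_\nu$ to a genuine metric $\delta$, one sees that for a rectifiable curve $\gamma$ in $\Gamma$ one can bound its $\delta$-length from below in terms of $\delta_\nu(x_0,\cdot)$ evaluated along $\gamma$, and hence in terms of $\nu(B)^{1/Q}$; integrating against the modulus estimate for $\rho=(\omega/\nu(B))^{1/Q}\cdot(\text{suitable normalization})$ — the natural test function coming from the upper gradient of the $\delta$-distance function — converts the modulus lower bound into a lower bound for $\int_{2B}\omega^{1/Q}\,d\mu$ or, after Hölder, an upper bound for $\nu(B)$ in terms of averages of powers of $\omega$. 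Running this over all scales $\leq r$ and over sub-balls of $B$, together with the doubling of $\nu$ to compare $\nu$ on a ball with $\nu$ on its dilate, should yield $\kint_B\omega^{s}\,d\mu\leq c\,(\kint_{2B}\omega\,d\mu)^{s}$ for some $s>1$, which is exactly~\eqref{rhi2} for $p=s$ after replacing $2B$ by $B$ via a standard self-improving/doubling argument.

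With the reverse Hölder inequality in hand, Theorem~\ref{lemma:gehring} upgrades the exponent if needed, and then the implication "reverse Hölder $\Rightarrow$ $A_q$ for some finite $q$ $\Rightarrow$ $A_\infty$" — which holds in doubling metric measure spaces by the same computation as in $\R^n$, and which will be recorded among the equivalences in Section~\ref{section:characterizations} — finishes the proof. The main obstacle I anticipate is the middle step: extracting the reverse Hölder inequality from the metric-doubling hypothesis. In $\R^n$ Semmes uses lines parallel to the axes and one-dimensional integration; here the replacement must be the Poincaré-type modulus estimate of Lemma~\ref{lemma:modulus}, and the delicate point is choosing the right admissible function $\rho$ (essentially a power of $\omega$ dictated by the exponent $1/Q$ in $\delta_\nu$) and correctly bookkeeping the passage between $\delta_\nu$-lengths of curves, $\nu$-measures of balls, and $\mu$-integrals of powers of $\omega$, so that the exponent one obtains is genuinely $>1$ and the constants depend only on $c_A$, $c_D$, $c_P$ and the constant in~\eqref{eqn:distance}.
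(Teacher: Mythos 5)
Your overall skeleton --- the lower bound on the $1$-modulus of the curve family leaving $B(x_0,r_0)$ from Lemma~\ref{lemma:modulus}, an admissible function proportional to $\omega^{1/Q}\chi_{B}/\nu(B)^{1/Q}$, the resulting reverse H\"older inequality $\big(\kint_B\omega\dmu\big)^{1/Q}\leq C\kint_B\omega^{1/Q}\dmu$, then Gehring and the equivalence of the reverse H\"older inequality with the $A_\infty$ comparison condition --- is exactly the strategy of the paper. However, there is a genuine gap at the step you yourself flag as delicate: the admissibility of $\rho=C\,\omega^{1/Q}\chi_{B}/\nu(B)^{1/Q}$ cannot be verified for the weight $\omega$ itself. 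To get $\int_\gamma\omega^{1/Q}\,ds\geq C^{-1}\nu(B)^{1/Q}$ you must bound the restriction of $\omega^{1/Q}$ to the curve from below by $\nu$-averages over balls centred on the curve; since a rectifiable curve is $\mu$-null and $\omega$ is merely an $L^1_{loc}$ density, no such pointwise control exists (one can change $\omega$ on a null set containing $\gamma$, or let $\omega$ oscillate wildly at small scales, without affecting $\nu$ or $\delta_\nu$). The hypothesis~\eqref{eqn:distance} constrains only ball averages of $\omega$, never its values along curves.

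The paper resolves this with a mollification that your proposal is missing: for each $t>0$ it builds a partition of unity $\{\phi_i^t\}$ subordinate to a cover by balls of radius $t$ and sets $\omega_t=\sum_i a_i^t\phi_i^t$ with $a_i^t=\int\phi_i^t\,d\nu/\int\phi_i^t\dmu$, so that $\omega_t(y)$ is comparable to $\nu(B(x,t))/\mu(B(x,t))$ whenever $d(x,y)\leq 2t$. For this regularized weight the curve estimate does go through: one cuts $\gamma$ into arcs of length $t$, bounds each arc integral below by $\nu(B(\gamma(jt),t))^{1/Q}\geq C^{-1}\delta_\nu(\gamma((j-1)t),\gamma(jt))$ using Ahlfors regularity, and sums using the chain characterization of~\eqref{eqn:distance} from Lemma~\ref{lemma:kolmioey}. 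This yields the reverse H\"older inequality for $\omega_t$ with constants independent of $t$, hence, via Gehring and Lemma~\ref{equivalence}, a uniform $A_\infty$ comparison for the measures $\nu_t$; a final step, also absent from your sketch, shows $\nu_t\to\nu$ weakly and transfers the comparison condition to $\nu$ by testing on open and closed sets. Note also that what you need at the end is only the implication from the reverse H\"older inequality to the comparison condition~\eqref{ainfinity} (Lemma~\ref{equivalence}); membership in an actual $A_q$ class for finite $q$ is a strictly stronger statement in metric spaces, requiring in addition that $\nu$ be doubling together with Theorem~\ref{theorem13} and Lemma~\ref{lemma3.3}.
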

\begin{proof}



First, we construct a set of measures $\{\nu_t\}_{t>0}$ that
approximate $\nu$. Then we show that the measures
$\{\nu_t\}_{t>0}$ satisfy a reverse H\"older inequality with
uniform constants.
%
Fix $t>0$. Let $\{B_i^t=B(x_i,t)\}_{i=1}^\infty$ be a collection
of balls such that
\[
 X=\bigcup_{i=1}^\infty B_i^t
\]
and
\[
B(x_i,t/5)\cap B(x_j,t/5)=\emptyset\textrm{ for all }i\neq j.
\]
Note that the doubling property of $\mu$ implies that
\begin{equation}
\label{overlap}
 \sum_{i=1}^\infty\chi_{2B_i^t}<C.
\end{equation}
To construct a partition of unity, we define cut--off functions
\[
 \widetilde\phi_i^t(x)=
\begin{cases}
 1,& x\in B_i^t,\\
1-\dist(x,B_i^t)/t,&x\in 2B_i^t\setminus B_i^t, \\
0,&  x\in X\setminus 2B_i^t,
\end{cases}
\]
and we set
\[
 \phi_i^t=\frac{\widetilde\phi_i^t}{\sum_{i=1}^\infty \widetilde\phi^t_i}.
\]
Let
\[
 a_i^t=\frac{\int_X\phi_i^t \,d\nu}{\int_X\phi_i^t \dmu}.
\]
Since both $\mu$ and $\nu$ are doubling, and
\[
 \frac1C\chi_{B_i^t}\leq \phi_i^t\leq \chi_{2B_i^t},
\]
we have
\begin{equation}\label{eqn:a_i}
 \frac1C \frac{\nu(B_i^t)}{\mu(B_i^t)} \leq \frac1C \frac{\nu(B_i^t)}{\mu(2B_i^t)}\leq a_i^t\leq C\frac{\nu(2B_i^t)}{\mu(B_i^t)}\leq C\frac{\nu(B_i^t)}{\mu(B_i^t)}.
\end{equation}
Finally, we define the measures $\nu_t$, $t>0$ as
\[
\nu_t(A) =\sum_{i=1}^\infty a_i^t \int_A\phi_i^t \dmu.
\]
Thus
\[
 d\nu_t=\omega_t\dmu=\left(\sum_{i=1}^\infty a_i^t\phi_i^t\right)d\mu.
\]
The doubling property of $\mu$ and $\nu$ together with~\eqref{eqn:a_i} imply that for every $x,y\in X$ such that $d(x,y)\leq 2t$, we have
\begin{equation}\label{eqn:w_t_vakio}
 \frac1C\omega_t(x)\leq \omega_t(y)\leq C\omega_t(x).
\end{equation}
More precisely, we have
\begin{equation}\label{eqn:w_t:n koko}
 \frac1C\frac{\nu(B(x,t))}{\mu(B(x,t))}\leq \omega_t(y)\leq C\frac{\nu(B(x,t))}{\mu(B(x,t))},
\end{equation}
where $C$ depends only on the doubling constants of $\mu$ and $\nu$.

Now fix a ball $B(x_0,r_0)$ in $X$. Let $\Gamma$ be the set of all
rectifiable curves $\gamma:[0,L]\rightarrow X$ parametrized by arc
length such that $\gamma(0)\in B(x_0,r_0/2)$ and
$\gamma(L)\in\partial B(x_0,r_0)$. Fix $\gamma\in \Gamma$.

If $t\geq r_0$, then by~\eqref{eqn:w_t_vakio},
\[
  \frac1C\omega_t(x_0)\leq \omega_t(\gamma(s))\leq C\omega_t(x_0)
\]
for every $s\in[0,L]$ and thus we have
\[
\begin{split}
 \delta_\nu(\gamma(0),\gamma(L))\leq&C\nu(B(x_0,r_0))^{1/Q}
=Cr_0\frac{\nu(B(x_0,r_0))^{1/Q}}{\mu(B(x_0,r_0))^{1/Q}}\\
\leq& CL\omega_t(x_0)^{1/Q}\leq
C\int_0^L\omega_t(\gamma(z))^{1/Q}\,dz.
\end{split}
\]
Here we also used the doubling property of $\nu$, the fact that
\[
r_0/2\leq d(\gamma(0),\gamma(L))\leq 2r_0,
\]
and Ahlfors $Q$--regularity of $\mu$.

Now condiser the case $t< r_0$. Let $k$ be the integer part of $L/t$.
Using the previous estimate, Lemma~\ref{lemma:kolmioey} and the doubling property of $\nu$, we obtain
\[
\begin{split}
&\int_0^L\omega_t(\gamma(s))^{1/Q}\,ds
=\sum_{j=1}^k\int_{(j-1)t}^{jt}\omega_t(\gamma(s))^{1/Q}\,ds+ \int_{kt}^L\omega_t(\gamma(s))^{1/Q}\,ds\\
&\geq 1/C\sum_{j=1}^k \nu(B(\gamma(jt),t))^{1/Q}
\geq 1/C\sum_{j=1}^k \delta_\nu(\gamma((j-1)t),\gamma(jt)) \\
&\geq 1/C\delta_\nu(\gamma(0),\gamma(L))
\end{split}
\]
Thus for every $\gamma\in \Gamma$ we have
\[
 \nu(B(x_0,r_0))^{1/Q}\leq C\int_\gamma w_t^{1/Q}\,ds.
\]
If we define
\[
 \rho =
 \frac{C}{\nu(B(x_0,r_0))^{1/Q}}\omega_t^{1/Q}\chi_{B(x_0,r_0)},
\]
then $\rho$ satisfies~\eqref{eqn:modulus_condition} for every
$\gamma\in \Gamma$ and consequently
\[
 \mod_1(\Gamma)\leq \int_X \rho \dmu=\frac{C}{\nu(B(x_0,r_0))^{1/Q}}
 \int_{B(x_0,r_0)}\omega_t^{1/Q}\dmu.
\]
This combined with Lemma~\ref{lemma:modulus} gives
\begin{equation}\label{eqn:w_t_rh}
\begin{split}
&\left(\kint_{B(x_0,r_0)}\omega_t\dmu\right)^{1/Q}
 \leq C\left(\kint_{B(x_0,r_0+4t)}\omega\dmu\right)^{1/Q} \\
&\qquad\leq C\left(\kint_{B(x_0,r_0)}\omega\dmu\right)^{1/Q} \leq
C\kint_{B(x_0,r_0)}\omega_t^{1/Q}\dmu,
\end{split}
\end{equation}
where $C$ is independent of $t$, $x_0$ and $r_0$. The first two inequalities above follow from the definition of $\nu_t$ and the doubling property of $\nu$.

If we set $f=\omega_t^{1/Q}$, the Gehring lemma
\ref{lemma:gehring} now implies that there exists $\eps>0$ such
that
\begin{equation*}
\left(\kint_{B(x_0,r_0)}\omega_t^{1+\eps/Q}\dmu\right)^{\frac{1}{1+\eps/Q}}
\leq C\kint_{B(x_0,r_0)}\omega_t\dmu,
\end{equation*}
with $C$ independent of $t$, $x_0$ and $r_0$. By
Lemma~\ref{equivalence} in the next section this implies that $\omega_t$ is an $A_\infty$--weight, and there exist $p>1$ and $C>0$, independent
on $t$, such that
\begin{equation}
\label{ainfinity:v_t}
\frac{\nu_t(E)}{\nu_t(B)}\leq
C\bigg(\frac{\mu(E)}{\mu(B)}\bigg)^{1/p}
\end{equation}
 for all balls $B$ and measurable subsets $E\subset B$.

Next, we show that
\[
\nu_t\rightarrow \nu
\]
weakly in the sense of measures as $t\rightarrow 0$. In order to do that, fix an open set $U\subset X$.
Denote
\[
 U_\eps=\{x\in U\,:\, d(x,X\setminus U)>\eps\}
\]
and
\[
 I^t=\{i \,:\, 2B_i^t\subset U\} =\{i\,:\, \phi^t_i=0\textrm{ in } X\setminus U\}.
\]
By the definition of $\nu_t$, we have
\[
 \begin{split}
  \nu_t(U)=&\sum_{i=1}^\infty \frac{\int_X\phi_i^t \,d\nu}{\int_X\phi_i^t \dmu}\int_U
  \phi_i^t\dmu\geq  \sum_{i\in I^t}\frac{\int_X\phi_i^t \,d\nu}{\int_X\phi_i^t \dmu}\int_U \phi_i^t\dmu
\\=&\int_X\sum_{i\in I^t}\phi_i^t\,d\nu \geq \nu(U_{4t}).
 \end{split}
\]
Thus
\[
 \liminf_{t\rightarrow0}\nu_t(U)\geq\liminf_{t\rightarrow
 0}\nu(U_{4t})=\nu(U).
\]
Since this holds for all open sets $U\subset X$, the claim
follows.



Next we show that \eqref{ainfinity:v_t} holds true for $\nu$, and
thus $\omega$ is an $A_\infty$ weight.
 To this end, fix a ball $B$, a measurable set $E\subset B$ and an
open set $V$ such that $E\subset V\subset B$. 
Note that the weak convergence of $\nu_t$ implies
 that
\[
 \limsup_{t\rightarrow 0}\nu_t(S)\leq \nu(S)
\]
for all closed sets $S\subset X$, and by the doubling property of $\nu$ we have
\[
 \nu(\overline B)\leq \nu(2B)\leq c_D\nu(B)
\]
for all balls $B\subset X$.
Consequently, 
\begin{equation*}
\begin{split} \frac{\nu(E)}{\nu(B)}\leq C\frac{\nu(V)}{\nu(2B)} \leq
C\frac{\nu(V)}{\nu(\overline{B})} &\leq
C\frac{\liminf_{t\rightarrow0}\nu_t(V)}{\limsup_{t\rightarrow0}\nu_t(\overline{B})}
\\
&\leq C\liminf_{t\rightarrow0}\frac{\nu_t(V)}{\nu_t(B)} \leq
C\bigg(\frac{\mu(V)}{\mu(B)}\bigg)^{1/p} \end{split}
\end{equation*}
Since $\mu$ is Borel regular, taking infimum over all such $V$
finishes the proof.
\end{proof}

\section{Characterizations for $A_\infty$-weights}
\label{section:characterizations}
There are several equivalent characterizations for $A_\infty$-weights in the Euclidean setting. However, all of them are not necessarily equivalent in general metric spaces. In this section, we study the relationship between these conditions in metric spaces that are only assumed to statisfy the doubling condition. Most of these results can be found in~\cite{StroTorc89}, but for completeness, we have included the proofs here.

Recall the definitions of $A_p$--weights from Section~\ref{weights}. It follows immediately from the definitions that
%
%
for every $1<p<q<\infty$ we have
\[
A_1\subset A_p\subset A_q\subset A_\infty
\]
Moreover, in the Euclidean case,
\[
 A_\infty =\bigcup_{p<\infty} A_p.
\]
Also in the metric setting, an $A_p$--weight is always an
$A_\infty$--weight, but there exist metric spaces, where the class of $A_\infty$-weights is strictly larger than the union; see \cite{StroTorc89} and Example \ref{example1}.

Next, we state five conditions that are equivalent 
in the Euclidean setting.
For more definitions and the
Euclidean case; see \cite{GarcdeFr85}.
We consider a slightly more general situation first.
Let $\nu$ be
an arbitrary measure on $X$.  We say that $\nu$ is a weighted measure with respect to $\mu$ is there exists $\omega\in L^1_{loc}(X)$ such that for every $\mu$--measurable set $A\subset X$ we have
\[
 \nu(A)=\int_A\omega\dd\mu.
\]

We define the conditions:

\begin{enumerate}
\item\label{comparability} There are $0<\eps,\delta<1$ such that
for each ball $B$ and each measurable set $E\subseteq B$, we have
$\nu(E)\leq(1-\delta)\nu(B)$ whenever $\mu(E)\leq\eps\mu(B)$.
\item\label{ainfinity} There are constants $c>0$ and $p\geq 1$
such that
\[
\frac{\nu(E)}{\nu(B)}\leq c\bigg(\frac{\mu(E)}{\mu(B)}\bigg)^{1/p}
\]
for each ball $B$ and each measurable set $E\subseteq B$.
\item\label{rhi} $\nu$ is a weighted measure with respect to
$\mu$, and there exist positive constants $\eps\,,C$ such that the
weight $\omega$ satisfies the reverse H\"older inequality
\[
\left(\kint_{B}\omega^{1+\varepsilon}\dmu\right)^{1/(1+\varepsilon)}\leq
C\kint_B \omega\dmu
\]
for all balls $B$.
\item\label{reversed} $\nu$ is a weighted
measure with respect to $\mu$, and there exist constants $c>0$,
$p\geq 1$ such that
 \[
\frac{\nu(E)}{\nu(B)}\geq c\bigg(\frac{\mu(E)}{\mu(B)}\bigg)^{p}
\]
for each ball $B$ and each measurable set $E\subseteq B$.
\item\label{ap} $\nu$ is a weighted measure with respect to $\mu$,
and the weight $\omega$ is in $A_p$ for some $p>1$.
\end{enumerate}

\medskip
If only the measure $\mu$ is assumed to be doubling, we obtain the following relations between the conditions above in the metric setting:
\[
\eqref{comparability}
\Leftarrow \eqref{ainfinity}\Leftrightarrow\eqref{rhi}\Leftarrow\eqref{reversed}\Leftrightarrow\eqref{ap}.
\]

First we make some immediate remarks.  The condition
\eqref{comparability} follows easily from \eqref{ainfinity}, and,
since \eqref{comparability} is symmetric with respect to $\nu$ and
$\mu$, it follows also from \eqref{reversed}. Condition
\eqref{reversed} follows from \eqref{ap} by applying H\"older
inequality on
\[
\mu(E)=\int_{B}\chi_{E}\omega^{1/p}\omega^{-1/p}\dmu.
\]
In addition \eqref{reversed} implies that $\nu$ is doubling.

The following example shows that in general metric spaces, the conditions $\eqref{rhi}$ and $\eqref{reversed}$ are not necessarily equivalent:
\begin{example}
\label{example1}
Let $X=\{x=(x_1,x_2)\in \R^2\,:\, x_1\in\{1,2\},x_2\in[0,1]\}$.
We endow $X$ with the metric
\[
 d(x,y)=\begin{cases}
         |x_2-y_2|, & x_1=y_1,\\
	 2,	& x_1\neq y_1.
        \end{cases}
\]
and the onedimensional Lebesgue measure. Now let $\omega((1,x_2))=0$ ja $\omega((2,x_2))=1$. This weight clearly satisfies condition \eqref{ainfinity}, but it cannot satisfy condition \eqref{reversed} since it is not doubling. 
\end{example}

%

\begin{lemma}
\label{equivalence} \eqref{ainfinity} $\Leftrightarrow$
\eqref{rhi}
\end{lemma}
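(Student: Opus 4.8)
The plan is to prove the two implications separately, using the Gehring lemma (Theorem~\ref{lemma:gehring}) as the engine for the harder direction.

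First I would prove \eqref{rhi} $\Rightarrow$ \eqref{ainfinity}. Assume $d\nu = \omega\,d\mu$ with $\omega$ satisfying the reverse H\"older inequality for some exponent $1+\eps$. By the Gehring lemma (applied with $p = 1+\eps$, noting that the reverse H\"older inequality forces $\nu$ to be doubling, which is the hypothesis Gehring needs), we may improve the exponent and assume $\omega$ satisfies a reverse H\"older inequality with some exponent $q = 1+\eps' > 1$; in fact any exponent slightly above $1+\eps$ works, but all we need is one fixed $q>1$. Now fix a ball $B$ and a measurable $E \subseteq B$. Apply H\"older's inequality with exponents $q$ and $q' = q/(q-1)$:
\[
\frac{\nu(E)}{\mu(B)} = \kint_B \chi_E \,\omega\dmu \leq \left(\kint_B \omega^q\dmu\right)^{1/q}\left(\kint_B \chi_E\dmu\right)^{1/q'} \leq C\left(\kint_B\omega\dmu\right)\left(\frac{\mu(E)}{\mu(B)}\right)^{1/q'},
\]
where the last step uses the reverse H\"older inequality. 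Dividing through by $\kint_B\omega\dmu = \nu(B)/\mu(B)$ gives exactly \eqref{ainfinity} with $p = q'$ and constant $C$.

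Next, the direction \eqref{ainfinity} $\Rightarrow$ \eqref{rhi}. This is the direction where the metric setting could in principle cause trouble, so it is the main obstacle, but here it actually follows from standard measure-theoretic manipulations that do not use any Euclidean structure. First, \eqref{ainfinity} immediately implies $\nu$ is absolutely continuous with respect to $\mu$: if $\mu(E) = 0$ then $\nu(E) = 0$, so by the Radon--Nikodym theorem $\nu$ is a weighted measure, $d\nu = \omega\,d\mu$ for some $\omega \in L^1_{loc}$. It also follows from \eqref{ainfinity} that $\nu$ is doubling (take $E = B$, $B$ replaced by $2B$, and use that $\mu$ is doubling), so the Lebesgue differentiation theorem applies and $\omega(x) = \lim_{r\to 0}\nu(B(x,r))/\mu(B(x,r))$ a.e. The key step is to obtain a weak-type distributional estimate for $\omega$ on a fixed ball $B$ and integrate it. Fix $B$ and set $\alpha = \kint_B\omega\dmu$. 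For $\lambda > \alpha$, the set $E_\lambda = \{x \in B : \omega(x) > \lambda\}$ has $\mu(E_\lambda)$ controlled via $\lambda\mu(E_\lambda) \leq \nu(E_\lambda) \leq \nu(B)$; combined with \eqref{ainfinity} this yields, after rearranging, a bound of the form $\mu(E_\lambda)/\mu(B) \leq C(\alpha/\lambda)^{p'}$ with $p' = p/(p-1) > 1$ (this is precisely the ``reverse'' of the weak-type bound: from $\nu(E_\lambda)/\nu(B) \geq \lambda\mu(E_\lambda)/\nu(B)$ and $\nu(E_\lambda)/\nu(B) \leq C(\mu(E_\lambda)/\mu(B))^{1/p}$ one solves for $\mu(E_\lambda)/\mu(B)$). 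Then for $1 < s < p'$,
\[
\kint_B \omega^s\dmu = s\int_0^\infty \lambda^{s-1}\frac{\mu(E_\lambda)}{\mu(B)}\dd\lambda \leq \alpha^s + s\int_\alpha^\infty \lambda^{s-1} C\left(\frac{\alpha}{\lambda}\right)^{p'}\dd\lambda = C'\alpha^s,
\]
the integral converging since $s - 1 - p' < -1$. Taking $s$-th roots gives the reverse H\"older inequality with $\eps = s - 1$, which is \eqref{rhi}. I expect the only points requiring care are the justification that $\nu$ is doubling and the distribution-function bookkeeping (the constants and the range of admissible exponents), none of which depends on the geometry of $X$ beyond $\mu$ being doubling.
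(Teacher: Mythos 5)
Your core arguments in both directions coincide with the paper's own proof: for \eqref{rhi} $\Rightarrow$ \eqref{ainfinity} you apply H\"older and then the reverse H\"older inequality to $\nu(E)=\int_B\chi_E\omega\dmu$, and for \eqref{ainfinity} $\Rightarrow$ \eqref{rhi} you derive the distributional bound $\mu(E_\lambda)/\mu(B)\leq C(\alpha/\lambda)^{p'}$ from $\lambda\mu(E_\lambda)\leq\nu(E_\lambda)$ combined with \eqref{ainfinity}, and integrate it by the layer--cake formula for an exponent $s<p'$. This is exactly the paper's argument and it is correct.

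However, two auxiliary claims you make along the way are false in this metric setting, and you should delete them rather than try to repair them. First, in the direction \eqref{rhi} $\Rightarrow$ \eqref{ainfinity} you invoke the Gehring lemma (Theorem~\ref{lemma:gehring}) on the grounds that ``the reverse H\"older inequality forces $\nu$ to be doubling.'' It does not: Example~\ref{example1} exhibits a weight satisfying \eqref{ainfinity} (hence, by this very lemma, \eqref{rhi}) whose associated measure $\nu$ is not doubling. Fortunately the Gehring step is superfluous: H\"older with the exponent $1+\eps$ already furnished by \eqref{rhi} gives \eqref{ainfinity} directly with $1/p=\eps/(1+\eps)$. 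Second, in the direction \eqref{ainfinity} $\Rightarrow$ \eqref{rhi} you claim that \eqref{ainfinity} implies $\nu$ is doubling by taking $E=B$ and replacing $B$ by $2B$; that substitution only yields $\nu(B)\leq c\,\nu(2B)$, which is trivially true and is not the doubling inequality, and again Example~\ref{example1} shows the implication itself is false --- this is precisely why conditions \eqref{ainfinity} and \eqref{reversed} can differ in metric spaces. Fortunately this step is also dispensable: the inequality $\lambda\mu(E_\lambda)\leq\nu(E_\lambda)$ needed for the distributional estimate follows immediately from $E_\lambda=\{x\in B:\omega(x)>\lambda\}$ and $\nu(E_\lambda)=\int_{E_\lambda}\omega\dmu$, with no appeal to Lebesgue differentiation of $\nu$ with respect to $\mu$. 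With these two claims excised, your proof is the paper's proof; note also the paper's closing observation that even the doubling of $\mu$ is not needed for this lemma.
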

\begin{proof}
We give a sketch of the proof. If we assume \eqref{ainfinity}, the
absolute continuity  part in \eqref{rhi} is clear. We fix a
ball $B$ and write \mbox{$E_\lambda=\{x\in B \colon
\omega(x)>\lambda\}$}. Then by \eqref{ainfinity}, we have
\begin{equation*}
\mu(E_\lambda) \leq\frac{1}{\lambda}\nu(E_\lambda)
\leq\frac{c}{\lambda}\nu(B)\bigg(\frac{\mu(E_\lambda)}{\mu(B)}\bigg)^{1/p},
\end{equation*}
and, hence
\[
\mu(E_\lambda)\leq \min \{ \mu(B),
 c\big(\nu(B)^q/(\lambda\mu(B)^{1/p})^q\big)\}.
\]
Now \eqref{rhi} follows from
 \begin{equation*}
\begin{split}
\int_{B}\omega^{1+\eps}\dmu =(1+\eps)\int_{0}^{\infty}\lambda^{\eps}\mu(E_{\lambda})\,d\lambda  
\end{split}
\end{equation*}
with $0<\eps <q-1$.

On the other hand,  \eqref{ainfinity} follows from \eqref{rhi} by
applying first the H\"older and then the reverse H\"older
inequality to $\nu(E)=\int_{B}\chi_E\omega\,d\mu$.

Notice, that the doubling property of $\mu$ is not needed here.
\end{proof}

The proof of the following lemma is similar to the Euclidean case; see \cite{GarcdeFr85}.

\begin{lemma}\label{lemma3.3}
\label{together} \eqref{ainfinity} $\&$ \eqref{reversed}
$\Rightarrow$ \eqref{ap}
\end{lemma}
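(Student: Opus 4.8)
The plan is to show that the weight $\omega$ whose measure $\nu$ satisfies both the $A_\infty$-type condition \eqref{ainfinity} and the reversed condition \eqref{reversed} belongs to $A_p$ for a suitable finite $p>1$. The natural route, following the Euclidean argument in \cite{GarcdeFr85}, is to verify the defining inequality
\[
\bigg(\kint_B\omega\dmu\bigg)\bigg(\kint_B\omega^{1-q}\dmu\bigg)^{p-1}\leq c_\omega
\]
directly, estimating the two factors separately by distribution-function arguments. First I would fix a ball $B$ and set $\lambda_0=\kint_B\omega\dmu=\nu(B)/\mu(B)$; the main work is to control $\kint_B\omega^{-s}\dmu$ for a small exponent $s>0$, since then one takes $q-1=s$, i.e. $p=1+1/s$. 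To bound this, write $F_\lambda=\{x\in B:\omega(x)<\lambda\}$ for $\lambda<\lambda_0$ and observe that on $F_\lambda$ the measure $\nu$ is small: $\nu(F_\lambda)\leq\lambda\mu(F_\lambda)\leq\lambda\mu(B)$, so $\nu(F_\lambda)/\nu(B)\leq\lambda/\lambda_0$. Applying \eqref{reversed} in the form $\mu(F_\lambda)/\mu(B)\leq (c^{-1}\nu(F_\lambda)/\nu(B))^{1/p}\leq C(\lambda/\lambda_0)^{1/p}$ gives a power-type decay of $\mu(F_\lambda)$ as $\lambda\to 0$, which is exactly what is needed to integrate $\omega^{-s}$: using
\[
\int_B\omega^{-s}\dmu = s\int_0^\infty \lambda^{-s-1}\mu(F_\lambda)\dd\lambda
\]
(splitting at $\lambda=\lambda_0$, with the tail handled trivially) one sees that the integral converges and is bounded by $C\mu(B)\lambda_0^{-s}$ provided $s<1/p$, where now $p$ is the exponent from \eqref{reversed}. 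Rearranging yields $\big(\kint_B\omega^{-s}\dmu\big)\leq C\big(\kint_B\omega\dmu\big)^{-s}$, which is precisely the $A_{1+1/s}$ condition after taking $s$-th roots appropriately.

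The role of hypothesis \eqref{ainfinity} (equivalently, by Lemma~\ref{equivalence}, the reverse Hölder inequality \eqref{rhi}) is to supply the complementary bound: to close the $A_p$ estimate one also needs that $\kint_B\omega\dmu$ is comparable to a suitable average, and more importantly that $\omega\in A_p$ requires both a lower integrability of $\omega$ and of $\omega^{1-q}$. The distribution estimate above already handles the negative power; the factor $\kint_B\omega\dmu$ itself is finite and the product is controlled once $s$ is fixed. Thus the two conditions enter as follows: \eqref{reversed} gives the decay of $\mu(F_\lambda)$ forcing $\omega^{-s}\in L^1$ locally with the right quantitative bound, and \eqref{ainfinity}/\eqref{rhi} guarantees $\omega$ itself lies in a higher $L^{1+\eps}$ with a reverse-Hölder bound, so that the averages appearing are mutually comparable and no degeneracy occurs; combining the two product bounds over an arbitrary ball $B$ with uniform constants gives $\omega\in A_p$ for $p=1+1/s$ with $s$ depending only on the constant in \eqref{reversed}.

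The main obstacle I anticipate is bookkeeping the exponents so that a single finite $p$ works uniformly over all balls: one must choose $s$ strictly below the threshold $1/p_{\text{rev}}$ coming from \eqref{reversed}, check that the resulting tail integrals genuinely converge (the integral $\int_0^{\lambda_0}\lambda^{-s-1}\cdot\lambda^{1/p_{\text{rev}}}\dd\lambda$ converges iff $s<1/p_{\text{rev}}$), and then confirm that the constant produced depends only on $c$, $p_{\text{rev}}$ and the doubling constant, not on $B$. There is also a minor subtlety that \eqref{reversed} implicitly forces $\omega>0$ a.e.\ (otherwise $\mu(E)$ could be positive while $\nu(E)=0$), which is needed to make sense of $\omega^{1-q}$; this should be noted at the outset. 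Since the paper explicitly says the proof is similar to the Euclidean case in \cite{GarcdeFr85}, I would keep the write-up to the distribution-function computation above and cite that reference for the routine verifications.
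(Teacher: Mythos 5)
Your distribution--function argument is correct and is essentially the standard Euclidean proof that the paper invokes by citing \cite{GarcdeFr85} rather than writing out: the bound $\mu(F_\lambda)/\mu(B)\leq C(\lambda/\lambda_0)^{1/p_{\mathrm{rev}}}$ obtained from \eqref{reversed} integrates, for any $s<1/p_{\mathrm{rev}}$, to $\kint_B\omega^{-s}\dmu\leq C\big(\kint_B\omega\dmu\big)^{-s}$, which is exactly the $A_{1+1/s}$ condition, and your observation that \eqref{reversed} forces $\omega>0$ a.e.\ is the right way to make $\omega^{1-q}$ meaningful. The only caveat is that your computation never actually uses \eqref{ainfinity} --- consistent with the paper's implication diagram, which records \eqref{reversed}$\Leftrightarrow$\eqref{ap} --- so the second paragraph's explanation of the role of \eqref{ainfinity} describes a hypothesis that is in fact superfluous rather than a needed ingredient.
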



The proof of the following theorem is based on ideas in \cite{StroTorc89}. However, the  proof we present here is organized in a different way and contains more details.

\begin{theorem}
If $\nu$ is doubling, then\label{theorem13} \eqref{comparability}
$\Rightarrow$ \eqref{ainfinity} and \eqref{comparability}
$\Rightarrow$ \eqref{reversed}.
\end{theorem}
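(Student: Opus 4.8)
The plan is to prove the two implications \eqref{comparability} $\Rightarrow$ \eqref{ainfinity} and \eqref{comparability} $\Rightarrow$ \eqref{reversed} under the standing hypothesis that $\nu$ is doubling; by the symmetry of \eqref{comparability} in $\mu$ and $\nu$, once the first implication is proved in a form that only uses doubling of the ``target'' measure, the second follows by interchanging the roles of $\mu$ and $\nu$ — but note that $\mu$ is doubling by assumption and $\nu$ is doubling by hypothesis, so the argument applies both ways. Thus I would concentrate on showing: if $\nu$ is doubling and the pair $(\mu,\nu)$ satisfies \eqref{comparability} with parameters $\eps,\delta$, then \eqref{ainfinity} holds.

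\medskip
\noindent\textbf{Iteration of the comparability condition.}
First I would fix a ball $B$ and a measurable set $E\subseteq B$ with $\mu(E)\le\eps\mu(B)$, so that $\nu(E)\le(1-\delta)\nu(B)$ directly. The goal is to upgrade this single-step estimate to the quantitative power bound by iteration. The natural device is a Calder\'on--Zygmund / good-$\lambda$ type stopping-time decomposition of $E$ inside $B$: given $E\subseteq B$ with $\mu(E)$ small relative to $\mu(B)$, decompose at the level $\eps$ to produce a countable family of ``sub-balls'' $\{B_j\}$ (obtained from a Vitali-type covering argument using the doubling property of $\mu$) such that $E$ is essentially covered by the $B_j$, on each $B_j$ one still has $\mu(E\cap B_j)\le\eps\mu(B_j)$ (or a fixed multiple thereof, absorbing the dilation constant from Vitali into a redefinition of $\eps$), and the total $\mu$-mass $\sum_j\mu(B_j)$ is controlled by $C\mu(E)/\eps$. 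Applying \eqref{comparability} on each $B_j$ gives $\nu(E\cap B_j)\le(1-\delta)\nu(B_j)$, and summing yields a gain. Iterating $m$ times along a dyadic scale of thresholds $\eps^m$ produces $\nu(E)\le C(1-\delta)^m\nu(B)$ whenever $\mu(E)\le\eps^m\mu(B)$ (up to bounded overlap constants coming from \eqref{overlap}-type estimates and the doubling of $\nu$, which is exactly where the hypothesis ``$\nu$ is doubling'' enters). Converting the geometric decay in $m$ into a power of $\mu(E)/\mu(B)$ by choosing $1/p=\log(1/(1-\delta))/\log(1/\eps)$ and interpolating for intermediate ratios gives precisely \eqref{ainfinity}.

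\medskip
\noindent\textbf{The second implication.}
For \eqref{comparability} $\Rightarrow$ \eqref{reversed}, I would simply observe that \eqref{comparability} is unchanged under swapping $\mu\leftrightarrow\nu$, that $\mu$ is doubling by the standing assumptions on the space, and that \eqref{reversed} is exactly \eqref{ainfinity} with the roles of $\mu$ and $\nu$ reversed (with the exponent $p$ in \eqref{reversed} playing the role of $1/p$ in \eqref{ainfinity}); hence the same iteration argument applied to the pair $(\nu,\mu)$ in place of $(\mu,\nu)$ gives the claim. The only point needing care is that the iteration genuinely used only the doubling of the measure in whose terms the \emph{smallness} hypothesis of \eqref{comparability} is phrased, together with a covering lemma available for \emph{both} measures — but any doubling measure on $X$ admits the needed Vitali-type covering, so this is fine.

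\medskip
\noindent\textbf{Main obstacle.}
The delicate step is the stopping-time / covering construction that feeds the iteration: one must extract the sub-balls $B_j$ so that simultaneously (i) $E$ is covered up to a $\nu$-null set, (ii) the smallness ratio $\mu(E\cap B_j)/\mu(B_j)$ is preserved (or controllably enlarged) on each $B_j$ after accounting for the dilation factor in the covering lemma, and (iii) the overlap of the $B_j$ is bounded so that summing $\nu(B_j)$ does not lose more than a constant — this last point is where the doubling of $\nu$ is essential and where a naive application would fail, since without doubling of $\nu$ one cannot pass from $\sum_j\nu(B_j)$ back to a multiple of $\nu(B)$. I would handle (ii) by absorbing the fixed dilation constant into a once-and-for-all shrinking of the threshold $\eps$ at the cost of worsening $p$, and (iii) by the standard bounded-overlap property of a maximal $\eps$-separated subfamily in a doubling space, exactly as in \eqref{overlap}.
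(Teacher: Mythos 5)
Your plan follows essentially the same route as the paper: cover $E$ by balls on which the $\mu$--density of $E$ is comparable to $\eps$ (a stopping rule at dyadic radii starting from Lebesgue points), pass to a disjoint subfamily by the $5r$--covering lemma, gain a factor $(1-\delta)$ from \eqref{comparability} on each ball, iterate with the enlarged set in place of $E$, and convert the geometric decay over $m\approx\log(\mu(B)/\mu(E))/\log(1/\eps)$ steps into the power bound; the symmetry argument for \eqref{reversed} is also exactly the paper's. So this is not a different method, and most of the sketch is sound.

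There is, however, one genuine gap, and it is not the one you flag. The bounded-overlap issue in your point (iii) is a non-issue: the Vitali subfamily is pairwise disjoint, so $\sum_j\nu(B_j)=\nu(\bigcup_j B_j)$ with no loss, and doubling of $\nu$ is needed only to pass from the $5$--dilates back to the disjoint balls. What your sketch does not address is that the iterates must stay \emph{spatially} inside a fixed dilate of $B$. After one step you replace $E$ by $\bigcup_j 5B_j$, which is larger both in measure and in extent, and the radii produced by the stopping rule ``density drops below $\eps$'' grow as the iteration proceeds; a priori the union could escape far outside $B$, in which case the terminal comparison against $\nu(B)$ and $\mu(B)$ in your asserted bound $\nu(E)\leq C(1-\delta)^m\nu(B)$ is unjustified. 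This is the delicate point of the paper's proof: it shows $\widetilde E_{k_0}\subset 5B_0$ by contradiction, noting that a chain of balls $B(x_k,r_k)$ reaching outside $5B_0$ would force $\sum_k 5r_k>4R$, while the growth estimate $\mu(\widetilde E_k)\leq(c_D^2/\eps)^k\mu(E)$ combined with the doubling bound $(r_k/R)^s\leq C\mu(5B_k)/\mu(5B_0)$ yields $\sum_k 5r_k\leq C\eps^{1/s}R$, a contradiction once $\eps$ is taken small enough (permissible, since \eqref{comparability} for some $\eps$ implies it for all smaller $\eps$). You would need to supply this localization step, together with the explicit choice of the number of iterations $k_0$ in terms of $\mu(E)/\mu(B)$ that makes it work, to turn your outline into a proof.
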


In order to prove Theorem~\ref{theorem13} we introduce the
notion of telescoping sequences of sets.

\begin{definition}
Let $s>0$. We say that $\{\mathcal F_k\}_{k=1}^{k_0}$ is a
$s$--\emph{telescoping sequence of collections of balls},
$\mathcal F_k=\{B_{i,k}\}_{i=1}^\infty$, provided that
\begin{itemize}
 \item $B_{i,k}\cap B_{j,k}$, for each $i\neq j$ and $k$.
\item For each $B\in\mathcal F_k$, $k=1,2,\ldots,k_0-1$, there
exists $\widetilde B\in \mathcal F_{k+1}$ such that
\[
 sB\subset s\widetilde B.
\]

\end{itemize}

\end{definition}

The following lemma is a standard covering argument, see for example Theorem 1.2 in~\cite{Hein01}. We have formulated it here to emphasize the fact that the cover can be chosen in such a way that every ball in $\mathcal F$ is included in $5B$ for some $B\in\mathcal G$.
\begin{lemma}\label{lemma:covering}
Every family $\mathcal F$ of balls of uniformly bounded diameter
in a metric space $X$ contains a disjointed subfamily $\mathcal G$
such that
\[
 \bigcup_{B\in\mathcal F}B\subset \bigcup_{B\in\mathcal G}5B.
\]
In fact, every ball $B$ from $\mathcal F$ meets a ball from
$\mathcal G$ with radius at least half that of $B$.
\end{lemma}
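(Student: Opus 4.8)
The plan is to prove this by a standard greedy maximality argument, which is the classical $5r$-covering (Vitali-type) lemma adapted to metric spaces.

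First I would reduce to the case where the radii are bounded: by hypothesis there is $R<\infty$ with $\operatorname{diam} B\le R$ for every $B\in\mathcal F$. For each $j\in\Z$ let $\mathcal F_j$ consist of those balls $B\in\mathcal F$ whose radius lies in $(2^{-j-1}R,\,2^{-j}R]$, so that $\mathcal F=\bigcup_{j\ge 0}\mathcal F_j$ and within each $\mathcal F_j$ all radii are comparable up to a factor $2$. I would then build $\mathcal G$ inductively in $j$: having chosen a disjointed subfamily $\mathcal G_0,\dots,\mathcal G_{j-1}$, let $\mathcal G_j$ be a maximal (with respect to inclusion) subfamily of
\[
 \{\,B\in\mathcal F_j : B\cap B'=\emptyset\text{ for all }B'\in\mathcal G_0\cup\dots\cup\mathcal G_{j-1}\,\}
\]
that is pairwise disjoint; such a maximal subfamily exists by Zorn's lemma since the union of a chain of pairwise-disjoint families is again pairwise disjoint. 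Set $\mathcal G=\bigcup_{j\ge 0}\mathcal G_j$, which is disjointed by construction.

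Next I would verify the covering property. Fix $B\in\mathcal F$, say $B\in\mathcal F_j$ with radius $r$, so $2^{-j-1}R<r\le 2^{-j}R$. By maximality of $\mathcal G_j$, the ball $B$ must meet some ball $B'\in\mathcal G_0\cup\dots\cup\mathcal G_j$; indeed, if $B$ were disjoint from all of them, then $\mathcal G_j\cup\{B\}$ would still be a pairwise-disjoint subfamily disjoint from the earlier generations, contradicting maximality. Let $B'=B(y,r')$ be such a ball, with $B'\in\mathcal G_i$ for some $i\le j$; then its radius satisfies $r'>2^{-i-1}R\ge 2^{-j-1}R> r/2$, so indeed $\mathcal G$ meets $B$ in a ball of radius at least $r/2$. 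Finally, if $x\in B=B(z,r)$ and $w\in B\cap B'$, then by the triangle inequality
\[
 d(x,y)\le d(x,z)+d(z,w)+d(w,y)< r+r+r' < 2r'+2r' +r' = 5r',
\]
using $r<2r'$, so $x\in 5B'$. Hence $\bigcup_{B\in\mathcal F}B\subset\bigcup_{B'\in\mathcal G}5B'$, which is the claimed inclusion.

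The only genuine subtlety—rather than the routine triangle-inequality bookkeeping—is the appeal to maximality when $X$ is not separable: one must be sure that a maximal pairwise-disjoint subfamily disjoint from the earlier generations actually exists, which is handled by Zorn's lemma as above (no countability of $\mathcal F$ is assumed). Everything else is bookkeeping with the dyadic radius decomposition; I would keep that brief and refer to Theorem~1.2 in~\cite{Hein01} for readers wanting the fully detailed version.
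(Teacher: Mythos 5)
Your proof is correct, and it is precisely the standard $5r$-covering argument (dyadic decomposition of the radii, greedy/Zorn maximal disjoint selection generation by generation, then the triangle inequality with $r<2r'$) that the paper itself does not reproduce but delegates to Theorem~1.2 of \cite{Hein01}. The only cosmetic point is that your dyadic decomposition is on radii, so the hypothesis ``uniformly bounded diameter'' should be read, as is standard, as a uniform bound on the radii of the balls in $\mathcal F$; with that reading every step checks out, including the ``in fact'' clause, since $B'\in\mathcal G_i$ with $i\le j$ forces $r'>2^{-j-1}R\ge r/2$.
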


Now we are ready to prove Theorem~\ref{theorem13}.

\begin{proof}[Proof of Theorem~\ref{theorem13}]
We show that \eqref{comparability} implies \eqref{ainfinity}. 
Note
that in this proof, $c_D$ denotes a constant that only depends on
the doubling constants of $\mu$ and $\nu$, but it is not
necessarily exactly the doubling constant.

Fix a ball
$B_0=B(x_0,R)$ and a measurable set $E\subset B_0$.
To prove the assertion, we will construct a $5$-telescoping
sequence of collections of balls $\mathcal F_k$, $k=1,2,\ldots,
k_0,$ where $k_0$ is an integer such that
\begin{equation}\label{eqn:k_0}
(\eps/c_D^2)^{k_0+2} < \mu(E)/\mu(B_0)\leq (\eps/c_D^2)^{k_0+1},
\end{equation}
and the following properties hold: If
\begin{equation}\label{E_k}
 E_k:=\bigcup_{B\in\mathcal F_k} B
\quad\textrm{and}\quad
 \widetilde E_k:=\bigcup_{B\in\mathcal F_k} 5B,
\end{equation}
then we have $E\subset\widetilde E_1$, $\widetilde
E_{k_{0}}\subset 5 B_0$, and
\begin{equation}\label{eqn:nu-estimaatti}
 \nu(\widetilde E_{k-1}\cap B)\leq (1-\delta)\nu(B)
\end{equation}
for all $B\in\mathcal F_k$. Here $\delta$ is as in
\eqref{comparability}. Note that~\eqref{eqn:k_0} implies that
\begin{equation}\label{k_0}
 k_0\geq \frac{\log(\mu(E)/\mu(B_0))}{\log(\varepsilon/c_D^2)}-2.
\end{equation}

We may assume that $\mu(E)/\mu(B_0)$ is small enough so that $k_0$
is positive, because if $\mu(E)/\mu(B_0)$ is bigger than any fixed
constant, choosing $c$ big enough makes the right--hand side
of~\eqref{ainfinity} bigger than one.

Once such a telescoping sequence of collections of balls has been
constructed, the conclusion follows since
\begin{equation}\label{eqn:Etilde_estimaatti}
\begin{split}
\nu(\widetilde E_{k-1}) & = \nu(\widetilde E_{k-1}\cap E_k)+
\nu(\widetilde E_{k-1}\setminus E_k) \\
& \leq \sum_{B\in \mathcal{F}_k}\nu(\widetilde E_{k-1}\cap B)+
\nu(\widetilde E_{k}\setminus E_k)
 \leq (1-\delta/c_D)\nu(\widetilde E_k).
\end{split}
\end{equation}
Here we used the fact that $E_k$ is a union of disjoint balls
satisfying~\eqref{eqn:nu-estimaatti},
$\widetilde{E}_{k-1}\subset\widetilde E_k$, and that
\[
\nu(\widetilde{E}_k) \leq c_D \nu(E_k).
\]
The last estimate above follows from the doubling property of
$\nu$.
Iterating~\eqref{eqn:Etilde_estimaatti}, we obtain
\[
 \nu(E)/\nu(B_0)\leq c_D\nu(E)/\nu(5B_0)\leq c_D\nu(\widetilde E_1)/\nu(\widetilde E_{k_0})\leq
 c_D(1-\delta/c_D)^{k_0-1}.
\]
Then \eqref{ainfinity} follows by~\eqref{k_0} with constants $c$
and $p$ depending only on $\delta$, $\varepsilon$ and the doubling
constants of $\mu$ and $\nu$. Now it remains to construct the
$\mathcal F_k$'s.

We start with $\mathcal F_1$. Let $x\in E$ be a Lebesgue point of
$X$. Then we have
\begin{equation*}
\lim_{r\rightarrow 0} \frac{\mu(E\cap B(x,r))}{\mu(B(x,r))}=1,
\end{equation*}
and hence there exists $r_\eps>0$ such that
\begin{equation}
\label{lowerbound}
 \frac{\mu(E\cap B(x,r_\eps))}{\mu(B(x,r_\eps))}>\varepsilon.
\end{equation}
On the other hand, for all $r>0$,
\begin{equation}
\label{E-finite}
 \frac{\mu(E\cap B(x,r))}{\mu(B(x,r))} \leq \frac{\mu(E)}{\mu(B(x,r))},
\end{equation}
where the right--hand side tends to zero as $r$ tends to infinity
since $E$ is of finite measure. We set $r_x=2^nr_\varepsilon$,
where $n$ is the smallest positive integer such that
\begin{equation}\label{eqn:yla}
 \frac{\mu(E\cap
 B(x,r_x))}{\mu(B(x,r_x))}\leq\varepsilon.
\end{equation}
By~\eqref{E-finite} such an $n$ exists, and  by the choice of
$r_x$ and~\eqref{lowerbound}, it follows that
\begin{equation}\label{eqn:ala}
 \frac{\mu(E\cap
 B(x,r_x/2))}{\mu(B(x,r_x/2))}>\varepsilon.
\end{equation}
Now the doubling property of $\mu$ together with~\eqref{eqn:yla}
and~\eqref{eqn:ala} implies that
\begin{equation}\label{eqn:r_x:n_valinta}
 \varepsilon\mu(B(x,r_x))/c_D < \mu(E\cap B(x,r_x))\leq \varepsilon\mu(B(x,r_x)).
\end{equation}

 Now let $\mathcal F_1$ be a pairwise disjoint subfamily
of the balls $\{B(x,r_x)\}_{x\in E}$ given by the $5$--covering
Theorem~\ref{lemma:covering}. Note that we are actually only able
to cover Lebesgue points of $E$ but it is enough, since
$\mu$--almost every point is a Lebesgue point.

Now let $E_1$ and $\widetilde E_1$ be defined by~\eqref{E_k}.
Next, we replace $E$ by $\widetilde{E}_1$ and construct $\mathcal
{F}_2$ the same way as we constructed $\mathcal {F}_1$. Moreover,
we repeat the procedure $k_0$ times and construct $\mathcal F_k$
by replacing $E$ above by $\widetilde E_{k-1}$.

Next, we show that $\widetilde E_{k_0}\subset 5B_0$. Assume, by
contradiction, that there exists $m\leq k_0$ such that $\widetilde
E_{m}\nsubseteq 5B_0$. Then there exist balls
$B(x_k,r_k)\in\mathcal F_k$, $k=1,2,\ldots m$ such that
\[
x_{k+1}\in B(x_k,5r_k),\qquad k=1,2,\ldots, m-1,
\]
and $B(x_{m},5r_m)\nsubseteq 5B_0$. Since $E\subset B_0$, we also
know that $B(x_1,r_1)$ intersects $B_0$. This implies that
\begin{equation}\label{suuri_summa}
 \sum_{k=1}^m 5 r_k > 4R_0.
\end{equation}
Note also that since $\widetilde E_{m-1}\subset 5B_0$, also
$x_m\in 5B_0$. Next, we need an estimate for the measure of
$\widetilde E_k$. First, by~\eqref{eqn:r_x:n_valinta}, we obtain
\[
\begin{split}
\mu(\widetilde{E}_k)&= \mu\big(\bigcup_{B\in\mathcal F_k} 5B\big)
\leq c_D\sum_{B\in\mathcal F_k}\mu(B) \leq
\frac{c_D^2}{\eps}\sum_{B\in\mathcal F_k}\mu(\widetilde
E_{k-1}\cap B)\\
& =\frac{c_D^2}{\eps}\mu(\widetilde E_{k-1}\cap E_k) \leq
\frac{c_D^2}{\eps}\mu(\widetilde E_{k-1}).
\end{split}
\]
Write $\widetilde E_0 =E$. By iterating the above inequality and
by using~\eqref{eqn:k_0} we get
\begin{equation}
\label{ek-upperbound}
\begin{split}
\mu(\widetilde{E}_k)\leq \bigg(\frac{c_D^2}{\eps}\bigg)^k\mu(E)
\leq \bigg(\frac{\eps}{c_D^2}\bigg)^{k_0+1-k}\mu(B_0) 
\end{split}
\end{equation}
for $k=0,1,\ldots,k_0$.

The doubling property of $\mu$ and the fact that $x_k\in 5B_0$,
$k=1,2,\ldots m$, implies that there exists $s>0$ depending only
on the doubling constant of $\mu$ such that
\[
\bigg(\frac{5r_k}{5R}\bigg)^s\leq C
\frac{\mu(5B_k)}{\mu(5B_0)}\leq C \frac{\mu(\widetilde
E_k)}{\mu(B_0)} \leq C\bigg(\frac{\eps}{c_D^2}\bigg)^{k_0+1-k}
\leq C\varepsilon^{k_0+1-k}
\]
for $k=1,2,\ldots m$. The last inequality above follows
from~\eqref{ek-upperbound}. From this we deduce that
\[
 \sum_{k=1}^m 5r_k\leq C \sum_{k=1}^{k_0}\varepsilon^{(k_0+1-k)/s}R\leq C\varepsilon^{1/s}R.
\]
Here constant $C$ depends only on the doubling constant. If $\mu$
and $\nu$ satisfy $(1)$ for some $0<\eps<1$, they satisfy it for
all smaller $\eps$ as well. Thus we can assume that $\eps$ is
small enough to guarantee that the right--hand side of the above
inequality is less than $4R$. However, this
contradicts~\eqref{suuri_summa}. Thus $\widetilde E_k\subset 5B_0$
for all $k=1,2,\ldots, k_0$.

Next, we verify that $\{\mathcal F_k\}_{k=1}^{k_0}$ is a
telescoping sequence of collections of balls. First, by
construction, the balls in $\mathcal F_k$ are pairwise disjoint
for $k=1,2,\ldots,k_0$.

Finally, if $B(x,r)\in\mathcal F_{k-1}$, then $B(x,5r)\subset
\widetilde E_{k-1}$. Hence, in the construction of $\mathcal F_k$,
$r_x\geq 5r$, since~\eqref{eqn:yla} with $E$ replaced by
$\widetilde E_{k-1}$ cannot hold for any smaller radius.
As $B(x,r_x)$ is one of the balls that is available when we use
the covering argument to choose $\mathcal F_k$, we have
$B(x,5r)\subset B(x,r_x)\subset 5B$ for some $B\in \mathcal F_k$.
This shows that also the second condition for telescoping
sequences holds and thus the collection is telescoping.

Since $\mu$ and $\nu$ satisfy condition~\eqref{comparability}, we
conclude from~\eqref{eqn:r_x:n_valinta}  with $E$ replaced by
$\widetilde E_{k-1}$ that~\eqref{eqn:nu-estimaatti} holds for
every $k=1,2,\ldots k_0$. This completes the proof of
\eqref{comparability} $\Rightarrow$ \eqref{ainfinity}.

Since \eqref{comparability} is symmetric with respect to $\mu$ and
$\nu$, and \eqref{reversed} is \eqref{ainfinity} with the roles of
$\mu$ and $\nu$ interchanged, we also get \eqref{comparability}
$\Rightarrow$  \eqref{reversed}.
\end{proof}

If the function $r\mapsto \mu(B(x,r))$ is continuous for every $x\in X$, then it is rather easy to show that the condition~\eqref{comparability} implies that $\nu$ is doubling and consequently the conditions \eqref{comparability}--\eqref{ap} are all equivalent, see Theorem 17 on page 9 in~\cite{StroTorc89}. In particular if $X$ is a geodesic space and $\mu$ doubling, the conditions are equivalent. However, the following example shows that there are doubling metric measure spaces supporting a $(1,1)$-Poincar\'e inequality where $r\mapsto \mu(B(x,r))$ is not always continuous.
It would be interesting to know whether the conditions \eqref{comparability}--\eqref{ap} are necessarily equivalent in this type of spaces.
\begin{example}
Let $X=S\cup H\subset \R^n$, where $S$ is a unit sphere centered at the origin and $H$ is a $(n-1)$--dimensional hyperplane that contains the origin. We endow the space with the $(n-1)$--dimensional Lebesgue measure and the metric inherited from $\R^n$. Now the function $r\mapsto \mu(B(0,r))$ has a discontinuity at $r=1$. 
\end{example}

A combination of the results in this section gives us the following:
\begin{corollary}
If $\nu$ is a doubling measure, then the conditions \eqref{comparability}--\eqref{ap} are equivalent. In particular, if $\dd\nu=\omega\dd\mu$ with $\omega\in SA_\infty$, then $\mu$ and $\nu$ satisfy all the conditions.
\end{corollary}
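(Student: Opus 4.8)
The plan is to assemble the implications proved earlier in this section into a single equivalence cycle, and then to feed the main theorem into it.

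For the first assertion I would argue as follows, keeping in mind that $\mu$ is doubling by the standing hypothesis of this section and that we now also assume $\nu$ doubling. Theorem~\ref{theorem13} supplies the two implications that genuinely use the doubling of $\nu$, namely $\eqref{comparability}\Rightarrow\eqref{ainfinity}$ and $\eqref{comparability}\Rightarrow\eqref{reversed}$. The reverse implications are the elementary remarks recorded after the list of conditions: $\eqref{ainfinity}\Rightarrow\eqref{comparability}$ is immediate, and $\eqref{reversed}\Rightarrow\eqref{comparability}$ follows because \eqref{comparability} is symmetric in $\mu$ and $\nu$. Hence $\eqref{comparability}\Leftrightarrow\eqref{ainfinity}\Leftrightarrow\eqref{reversed}$. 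Next, Lemma~\ref{equivalence} gives $\eqref{ainfinity}\Leftrightarrow\eqref{rhi}$ with no doubling needed, so \eqref{rhi} joins the cluster. Finally, since \eqref{comparability} now forces \eqref{ainfinity} and \eqref{reversed} to hold simultaneously, Lemma~\ref{lemma3.3} yields $\eqref{comparability}\Rightarrow\eqref{ap}$, while $\eqref{ap}\Rightarrow\eqref{reversed}$ is the H\"older-inequality remark, which returns us to \eqref{comparability}. Chaining these, all five conditions are equivalent.

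For the second assertion, suppose $\dd\nu=\omega\dd\mu$ with $\omega\in SA_\infty$. By the definition of a strong $A_\infty$-weight, $\nu$ is a metric doubling measure and in particular a doubling measure, so the first part of the corollary applies as soon as $\nu$ satisfies any one of \eqref{comparability}--\eqref{ap}. That is exactly the content of the main theorem: every strong $A_\infty$-weight is an $A_\infty$-weight, i.e. $\nu(E)/\nu(B)\leq c_\omega(\mu(E)/\mu(B))^{\delta}$ for all balls $B$ and measurable $E\subset B$, which is precisely condition \eqref{ainfinity} (after shrinking $\delta$ to at most $1$ if necessary). Hence all five conditions hold for $\mu$ and $\nu$.

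I do not anticipate a genuine obstacle here — the corollary is essentially bookkeeping. The only points needing care are to track which arrows in the diagram of this section require $\nu$ (rather than $\mu$) to be doubling, which are precisely the two arrows out of \eqref{comparability} into \eqref{ainfinity} and \eqref{reversed}, and to notice that the $A_\infty$-weight conclusion of the main theorem is verbatim condition \eqref{ainfinity}. It is also worth recording that $\omega\in SA_\infty$ is invoked twice: once for the doubling of $\nu$ (from the definition) and once for condition \eqref{ainfinity} (from the main theorem).
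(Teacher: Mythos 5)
Your proposal is correct and matches the paper's intent exactly: the paper offers no separate proof, stating only that the corollary is ``a combination of the results in this section,'' and your assembly of Theorem~\ref{theorem13}, the elementary remarks, Lemma~\ref{equivalence}, Lemma~\ref{lemma3.3}, and the main theorem is precisely that combination. Your added care about which arrows need $\nu$ doubling and about matching the $A_\infty$ exponent to condition~\eqref{ainfinity} is sound bookkeeping.
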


\begin{remark}\label{remark}
If $p<\infty$ then for every  $\omega\in A_p$, the measure $\dd\nu=\omega \dd\mu$ is doubling. 
Therefore, Lemma~\ref{lemma3.3} and Theorem~\ref{theorem13} imply that an $A_\infty$--weight is an $A_p$--weight for some $p<\infty$ if and only if the weight defines a doubling measure.
\end{remark}

Now, by Remark~\ref{remark}, we obtain the following.
\begin{corollary}
 Every strong $A_\infty$--weight is an  $A_p$--weight for some $p<\infty$.
\end{corollary}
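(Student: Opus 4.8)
The plan is simply to assemble the pieces already in place. Let $\omega\in SA_\infty$ and set $\dd\nu=\omega\dd\mu$. First I would record that, by the very definition of a strong $A_\infty$--weight, $\nu$ is a metric doubling measure, hence in particular a doubling measure on $X$. Next I would invoke our main result --- every strong $A_\infty$--weight is an $A_\infty$--weight --- so that the pair $\mu,\nu$ satisfies condition~\eqref{ainfinity}.

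The remaining task is to upgrade $A_\infty$ to $A_p$ for some finite $p$, and this is exactly where the doubling of $\nu$ is used. Concretely: \eqref{ainfinity} implies \eqref{comparability} for free; since $\nu$ is doubling, Theorem~\ref{theorem13} turns \eqref{comparability} into \eqref{reversed}; and Lemma~\ref{lemma3.3} then combines \eqref{ainfinity} and \eqref{reversed} into \eqref{ap}, i.e. $\omega\in A_p$ for some $p>1$. This is precisely the content of Remark~\ref{remark} (an $A_\infty$--weight lies in some $A_p$, $p<\infty$, if and only if it defines a doubling measure), and alternatively one may simply quote the preceding Corollary applied to this $\nu$.

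There is no real obstacle in the argument itself; the only point worth stressing is where the hypothesis enters. For a general $A_\infty$--weight the conclusion fails --- Example~\ref{example1} exhibits a weight satisfying~\eqref{ainfinity} that is not even doubling and therefore cannot belong to any $A_p$. What rescues the strong case is that the word ``strong'' builds the doubling of $\nu$ into the definition, and this is the one extra ingredient that makes the implication from~\eqref{ainfinity} to~\eqref{ap} legitimate in the metric setting.
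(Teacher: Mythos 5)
Your argument is correct and follows exactly the paper's route: the paper derives this corollary from Remark~\ref{remark}, which is itself the chain $\nu$ doubling (built into the definition of $SA_\infty$) plus condition~\eqref{ainfinity} from the main theorem, then \eqref{comparability} $\Rightarrow$ \eqref{reversed} via Theorem~\ref{theorem13} and \eqref{ainfinity} $\&$ \eqref{reversed} $\Rightarrow$ \eqref{ap} via Lemma~\ref{lemma3.3}. Your closing observation about where doubling is indispensable matches the paper's own discussion of Example~\ref{example1}.
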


\nocite{BaloKoskRogo07}
\nocite{Maas08}
\nocite{Semm93}
\nocite{StroTorc89}
\bibliographystyle{plain}
\bibliography{Viitteeta}
\end{document}